\newcommand{\R}{\mathbb{R}}
\newcommand{\N}{\mathbb{N}}
\newcommand{\F}{\mathbb{F}}
\newcommand{\B}{\mathbb{B}}
\newcommand{\Z}{\mathbb{Z}}
\newcommand{\cD}{\mathcal{D}}
\newcommand{\cK}{\mathcal{K}}
\newcommand{\cL}{\mathcal{L}}
\newcommand{\cW}{\mathcal{W}}
\def \one {\mathchoice {\hbox{1\kern -0.27em l}}{\hbox{1\kern -0.27em l}}
                     {\small{1\kern -0.27em l}}{\small{1\kern -0.27em l}}}
\newtheorem{proposition}{Proposition}[section]
\newtheorem{theorem}[proposition]{Theorem}
\newtheorem{corollary}[proposition]{Corollary}
\newtheorem{remark}[proposition]{Remark}
\newtheorem{example}[proposition]{Example}
\newtheorem{acknowledgement}[proposition]{Acknowledgement}
\newcommand {\proof} {\par\textit{Proof}. \ignorespaces}
\newcommand {\eproof}
      {\null\hfill{\large$\Box$}}
\begin{document}

\title{Are Quasi-Monte Carlo algorithms efficient for two-stage stochastic programs?}
\author{H. Heitsch \and H. Le\"ovey \and W. R\"omisch}
\date{\small{Humboldt-University Berlin, Institute of Mathematics, Berlin, Germany
(heitsch,leovey,romisch)@math.hu-berlin.de}}

\maketitle

\begin{abstract}
{Quasi-Monte Carlo algorithms are studied for designing discrete
approximations of two-stage linear stochastic programs. Their
integrands are piecewise linear, but neither smooth nor lie in the
function spaces considered for QMC error analysis. We show that under
some weak geometric condition on the two-stage model all terms of
their ANOVA decomposition, except the one of highest order, are 
continuously differentiable and second order mixed derivatives 
exist almost everywhere and belong to $L_{2}$. Hence, Quasi-Monte 
Carlo algorithms may achieve 
the optimal rate of convergence $O(n^{-1+\delta})$ with 
$\delta\in(0,\frac{1}{2}]$ and a constant not depending on the 
dimension if the effective dimension is close to two. The geometric 
condition is shown to be generically satisfied if the underlying 
probability distribution is normal. We discuss effective dimensions 
and dimension reduction techniques for two-stage integrands. Numerical
experiments show that indeed convergence rates close to the optimal
rate are achieved when using randomly scrambled Sobol' point sets
and randomly shifted lattice rules accompanied with suitable
dimension reduction techniques.}
\end{abstract}

\section{Introduction}
\label{intro}

Two-stage stochastic programs arise as deterministic equivalents of
improperly posed random linear programs
\begin{equation}\label{imlp}
\min\{\langle c,x\rangle:x\in X,\,Tx=h(\xi)\},
\end{equation}
where $X$ is a convex polyhedral subset of $\R^{m}$, $T$ a matrix,
$\xi$ is a $d$-dimensional random vector, $h$ represents an affine
function from $\R^{d}$ to $\R^{r}$ and $\langle\cdot,\cdot\rangle$
denotes the inner product in $\R^{m}$. The modeling idea consists in
the compensation of a possible deviation $h(\xi(\omega))-Tx$ for a
given realization $\xi(\omega)$ of $\xi$, by introducing additional
costs $\Phi(x,\xi(\omega))$ whose mean with respect to the
probability distribution $P$ of $\xi$ is added to the objective of
(\ref{imlp}). In two-stage stochastic programming it is assumed that
the additional costs represent the optimal value of a second-stage
linear program, i.e.,
\begin{equation}\label{secst}
\Phi(x,\xi)=\inf\{\langle q,y\rangle:
y\in\R^{\bar{m}},\,Wy=h(\xi)-Tx,y\ge 0\},
\end{equation}
where $W$ is a $(r,\bar{m})$-matrix called recourse matrix, 
$q\in\R^{\bar{m}}$ the recourse costs and $y$ the recourse decision.
The deterministic equivalent program then is of the form
\begin{equation}\label{twost}
\min\Big\{\langle c,x\rangle+\int_{\R^{d}}\Phi(x,\xi)P(d\xi):x\in X\Big\}.
\end{equation}
In practical applications of stochastic programming the dimension
$d$ is often large, e.g., in economics, energy, finance or
transportation (see \cite{WaZi05} for a survey of applied models).
It is worth noting that the option pricing models that served as
motivating examples for the further development of Quasi-Monte Carlo
algorithms (e.g. in \cite{WaFa03,WaSl05,WaSl11}) may be reformulated
as linear two-stage stochastic programs whose stochastic inputs are
means of geometric Brownian motions paths. So, in a sense, the
models considered here may be regarded as extensions of such
financial models (see Example \ref{option}).

The standard approach to solving the optimization model (\ref{twost}) 
consists in approximating the underlying probability distribution 
by discrete distributions $P_{n}$ based on a finite number $n$
of {\em samples} or {\em scenarios} $\xi^{j}\in\R^{d}$ with
probabilities $p_{j}$, $j=1,\ldots,n$, and to consider the 
approximate stochastic program
\[
\min\Big\{\langle c,x\rangle+\sum_{j=1}^{n}p_{j}\Phi(x,\xi^{j}):x\in X\Big\}.
\]
While the case of random
samples is studied in detail at least for independent and
identically distributed (iid) samples (see e.g. Chapters 6 and 7 in
\cite{RuSh03}, \cite[Sect. 4]{Roem03}), where the convergence rate
(in probability or quadratic mean) is $O(n^{-\frac{1}{2}})$. Only a
few papers related to stochastic programming dealt with the
situation of determi\-nistic samples with identical weights
$p_{j}=n^{-1}$ and proved (general) convergence results
(see \cite{DrHo06,PeKo05,HodM08,PfPi11}, \cite{Koiv05} for 
randomized samples or \cite{Roem10} for an overview). 

There exist two main approaches for the generation of discrete
approximations to $P$ based on deterministic samples with identical
weights. The first one is called {\em optimal quantization of
probability distributions} (see \cite{GrLu00}, \cite{Page97}) and
determines such quantizations by (approximately) solving best
approximation problems for $P$ in terms of the $L_{p}$-minimal (or
$L_{p}$-Wasserstein)  metric $\ell_{p}$, $p\ge 1$ (see Section 2.5
in \cite{RaRu98}). The primal and dual representations of $\ell_{1}$
together with a classical result (see \cite[Proposition 2.1]{Dudl69})
imply that
$$
c\,n^{-\frac{1}{d}}\leq\ell_{1}(P,P_{n})=\sup_{f\in \F_{d}\|f\|_{L}\le
1}\Big|\int_{\R^{d}}f(\xi)(P-P_{n})(d\xi)\Big|\leq\ell_{p}(P,P_{n})
$$
holds for sufficiently large $n$ and some constant $c>0$ if $P$ has
a density on $\R^{d}$ and $\F_{d}$ denotes the Banach space of Lipschitz
functions on $\R^{d}$ equipped with the Lipschitz norm $\|\cdot\|_{L}$.
This shows that the convergence rate of $\ell_{p}(P,P_{n})$ is at most
$O(n^{-\frac{1}{d}})$. This rate is indeed established in
\cite[Theorem 6.2]{GrLu00} under certain conditions on $P$. It is known
that the unit ball $\{f\in\F_{d}:\|f\|_{L}\leq 1\}$ is too large for
obtaining better rates.

The second approach utilizes {\em Quasi-Monte Carlo algorithms} that
are of the form
$$
Q_{n,d}(f)=n^{-1}\sum_{j=1}^{n}f(x^{j})\quad(n\in\N)
$$
and relies on the concept of equidistributed or low discrepancy 
point sets $\{x^{j}\}_{j=1}^{n}$ or sequences $(x^{j})_{j\in\N}$ 
in $[0,1)^{d}$ (see \cite{Sobo69,Nied92,Lemi09,DiPi10}). As observed 
in \cite{Hick98} certain reproducing kernel Hilbert spaces $\F_{d}$ 
of functions $f:[0,1]^{d}\to\R$ are particularly useful for estimating 
the quadrature error. Let $K:[0,1]^{d}\times[0,1]^{d}\to\R$ be a kernel 
satisfying $K(\cdot,y)\in\F_{d}$ and $\langle f,K(\cdot,y)\rangle=f(y)$ 
for each $y\in[0,1]^{d}$ and $f\in\F_{d}$. If $\langle\cdot,\cdot\rangle$
and $\|\cdot\|$ denote the inner product and norm in $\F_{d}$, and the 
integral
$$
I_{d}(f)=\int_{[0,1]^{d}}f(x)dx
$$
is a continuous functional on $\F_{d}$, the worst-case quadrature error
$e_{n}(\F_{d})$ allows the representation
\begin{equation}\label{qerr}
e_{n}(\F_{d})=\sup_{f\in\F_{d}\,,\|f\|\le 1}\big|I_{d}(f)-Q_{n,d}(f)\big|
=\sup_{\|f\|\le 1}|\langle f,h_{n}\rangle|=\|h_{n}\|
\end{equation}
according to Riesz' representation theorem for linear bounded functionals on
Hil\-bert spaces. The {\em representer} $h_{n}\in\F_{d}$ of the quadrature error 
is of the form
$$
h_{n}(x)=\int_{[0,1]^{d}}K(x,y)dy-n^{-1}\sum_{j=1}^{n}K(x,x^{j})\quad(\forall
x\in[0,1]^{d}).
$$
In the standard setting, the weighted tensor product Sobolev space
\cite{SlWo98}
\begin{equation}\label{wsob}
\F_{d}=\cW_{2,{\rm mix}}^{(1,\ldots,1)}([0,1]^{d})
=\bigotimes_{i=1}^{d}W_{2}^{1}([0,1])
\end{equation}
equipped with the weighted norm $\|f\|_{\gamma}^{2}=\langle
f,f\rangle_{\gamma}$ and inner product (see Section \ref{anovadec}
for the notation)
\begin{equation}\label{wnorm}
\langle f,g\rangle_{\gamma}=\sum_{u\subseteq\{1,\ldots,d\}}
\gamma_{u}^{-1}\int_{[0,1]^{|u|}}\frac{\partial^{|u|}}{\partial
x^{u}}f(x^{u},\mathbf{1}^{-u})\frac{\partial^{|u|}}{\partial
x^{u}}g(x^{u},\mathbf{1}^{-u})d x^{u},
\end{equation}
where the sequence $(\gamma_{i})$ is positive and nonincreasing, and
$\gamma_{u}$ is given by
$$
\gamma_{u}=\prod_{i\in u}\gamma_{i}
$$
for $u\subseteq\{1,\ldots,d\}$, is a reproducing kernel Hilbert space with 
the kernel
$$
K_{d,\gamma}(x,y)=\prod_{i=1}^{d}\big(1+\gamma_{i}
[1-\max\{x_{i},y_{i}\}]\big)\quad(x,y\in[0,1]^{d}).
$$
This is the so called weighted {\em anchored} tensor product Sobolev space, 
with anchor at the point $\mathbf{1}=(1,\dots,1)\in [0,1]^d$. By considering 
$\F_{d}$ now with the inner product 
$$
\langle f,g\rangle_{\gamma}=\!\!\!\sum_{u\subseteq\{1,\ldots,d\}}\!\!
\!\!\gamma_{u}^{-1}\!\!\int_{[0,1]^{|u|}}\!\!\Big(\int_{[0,1]^{d-|u|}} 
\frac{\partial^{|u|}}{\partial x^{u}}f(x)dx^{-u}\Big)\! 
\Big( \int_{[0,1]^{d-|u|}}\frac{\partial^{|u|}}{\partial
x^{u}}g(x)dx^{-u}\Big)d x^{u} 
$$
we obtain the so called weighted {\em unanchored} tensor product Sobolev 
space \cite{DSWW04,KuSS11} with the kernel
$$
K_{d,\gamma}(x,y)=\prod_{i=1}^{d}\big(1+\gamma_{i}(0.5 B_{2}(|x_{i}-y_{i}|)+
B_{1}(x_{i})B_{1}(y_{i}))\big)\quad(x,y\in[0,1]^{d}),
$$
where $B_{1}(x)=x-\frac{1}{2}$ and $B_{2}(x)=x^{2}-x+\frac{1}{6}$ are the 
Bernoulli polynomials of order $1$ and $2$, respectively.
 
Another example is a weighted  tensor productWalsh space consisting of Walsh 
series (see \cite[Example 2.8]{DiPi10} and \cite{Dick08}). These three spaces 
became important for analyzing the recently developed randomized lattice rules, 
namely, randomly shifted lattice rules \cite{SlKJ02,Kuo03,KSWW10a,NuCo06}) 
and random digitally shifted polynomial lattice rules (see \cite{Dick08,DiPi10}).
Both are special cases of {\em randomized Quasi-Monte Carlo algorithms (RQMC)}
which will be discussed in Section \ref{rqmc}. 

Here, we just mention that randomly shifted lattice rules 
\begin{equation}\label{rslr}
Q_{n,d}(\Delta,f)=n^{-1}\sum_{j=0}^{n-1}f\left(\left\{\frac{j g}{n}+\Delta\right\}\right) 
\end{equation}
can be constructed, where $\Delta$ is uniformly distributed in $[0,1)^d$, 
$g\in\Z^{d}$ is the generator of the lattice which is obtained by a 
component-by-component algorithm and $\{\cdot\}$ means taking componentwise 
the fractional part. For $f$ belonging to the weighted (un)anchored tensor 
product Sobolev space $\F_{d}$ the root mean square error of such randomly 
shifted lattice rules can be bounded by \cite{SlKJ02,Kuo03,DSWW04}
\begin{equation}\label{rate}
\sqrt{\mathbb{E}_{\Delta} \left|I_{d}(f)-Q_{n,d}(\Delta,f)\right|^{2}} 
\le C(\delta)n^{-1+\delta}\|f\|_\gamma,
\end{equation}
where the constant $C(\delta)$ does not depend on the dimension $d$
if the sequence of nonnegative weights $(\gamma_{j})$ satisfies
\begin{equation}\label{weights_decay}
\sum_{j=1}^{\infty}\gamma_{j}^{\frac{1}{2(1-\delta)}}<\infty\,.
\end{equation}

Unfortunately, typical integrands in linear two-stage stochastic
programming (see Section \ref{twostage}) do not belong to such
tensor product Sobolev or Walsh spaces and are even not of bounded
Hardy and Krause variation (on $[0,1]^{d}$). The
latter condition represents the standard requirement on the
integrand $f$ to justify Quasi-Monte Carlo algorithms via the
Koksma-Hlawka theorem \cite[Theorem 2.11]{Nied92}.

Alternatively, it is suggested in the literature to study the
so-called ANOVA decomposition (see Section \ref{anovadec}) of such
integrands, the smoothness of the ANOVA terms, effective dimensions
and/or sensitivity indices of the integrands.

The aim of the present paper is to follow the suggestions and to
derive theoretical arguments that explain why modern RQMC methods,
with focus on randomly shifted lattice rules (\ref{rslr}), converge 
with nearly the optimal rate (\ref{rate}) for the considered class 
of stochastic programs although the integrands do not satisfy 
standard requirements in QMC analysis, e.g., do not belong to 
the weighted tensor product Sobolev space (\ref{wsob}). 

As a first step in this direction we show in Section
\ref{anovatwostage} that all ANOVA terms except the one of highest
order are continuously differentiable and possess second order
partial derivatives almost everywhere under some geometric condition 
on the second stage program. In particular, the first and second order
ANOVA terms belong to the tensor product Sobolev space (\ref{wsob}).
Error estimates show that the QMC convergence rate dominates the
error if the effective superposition dimension is equal to $2$ 
(Remark \ref{r1}). In addition, we show in Section \ref{generic} 
that the geometric condition is satisfied for almost all
covariance matrices if the underlying random vector is Gaussian. The
meaning of ''almost all'' is also explained there. We also provide 
estimates of sensitivity indices and mean dimension in Section 
\ref{sens} and discuss techniques for dimension reduction. In 
accordance with the theoretical results in Section \ref{anovatwostage} 
our preliminary computational results in Section \ref{compexp} show 
that scrambled Sobol' sequences and randomly shifted lattice rules 
applied to a large scale two-stage stochastic program achieve convergence 
rates close to the optimal rate (\ref{rate}) if principal component 
analysis (PCA) is employed for dimension reduction.

\section{Randomized Quasi-Monte Carlo methods}
\label{rqmc}

Randomized Quasi-Monte Carlo algorithms (RQMC) permit us to combine 
the good features of Monte Carlo within Quasi-Monte Carlo methods 
for practical error estimation. 

If $f$ has mixed partial derivatives of second order in each variable 
in $L_2([0,1]^d)$, then the convergence rate \eqref{rate} can be 
improved to nearly $O(n^{-2})$ by embedding the function into an 
appropriate Korobov space through the so called {\em tent} or 
{\em baker's} transformation (see \cite[Section 5]{DiKS13}). 
Although this is theoretically true, this ``extra'' improved rate 
of convergence (over the already good $O(n^{-1+\delta})$) for 
smoother integrands is rarely observed for RQMC in practical 
applications of high-dimensional integration where only moderate 
or small sample sizes $n$ are affordable for computations 
\cite{Hick02}. 

A large class of QMC rules that can be randomized are the well 
known $(t,m,d)$-nets and $(t,d)$-sequences \cite{Nied92}. 
The randomization techniques for these constructions follow 
mainly two schemes: {\em random digital shifts} and {\em random
scramblings}. Random digital shifting of $(t,m,d)$-nets and 
$(t,d)$-sequences can be performed in a similar way as mentioned 
for randomly shifting lattice rules, but the operations to add 
the shift must be carried out in the basis $b$ used to define 
the $(t,m,d)$-nets (see \cite[Section 6]{DiKS13}). The 
resulting RQMC point set preserves the original net structure. 
Similar bounds for the root mean square error as in \eqref{rate} 
can be obtained for integrands belonging to the weighted 
(anchored and unanchored) tensor product Sobolev space $\F_{d}$
by using a special class of $(t,m,d)$-nets called {\em polynomial 
lattice rules}, see again \cite[Section 6]{DiKS13}. \\
The scrambling method was first introduced by Owen in \cite{Owen95}. 
The basic properties of Owen's scrambling are the following:

\begin{proposition}(\textbf{Equidistribution})\\
A randomized $(t,m,d)$-net in base $b$ using Owen's scrambling is 
again a $(t,m,d)$-net in base $b$ with probability 1. A randomized 
$(t,d)$-sequence in base $b$ using Owen's scrambling is again a 
$(t,d)$-sequence in base $b$ with probability 1.
\end{proposition}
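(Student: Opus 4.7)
The plan is to prove both assertions by a single structural observation: Owen's scrambling, for every realization of its underlying random permutations, induces a bijection on the collection of elementary intervals in base $b$ of each fixed shape, and therefore preserves the defining counting property of $(t,m,d)$-nets. The ``probability $1$'' wording only accommodates the measure-zero event that some point of the original net lies on a boundary and admits two base-$b$ expansions.

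First I would recall that a $(t,m,d)$-net in base $b$ is a set of $b^m$ points such that every elementary interval
$$
E_{(k_1,\ldots,k_d),(a_1,\ldots,a_d)}=\prod_{i=1}^{d}\Big[\frac{a_i}{b^{k_i}},\frac{a_i+1}{b^{k_i}}\Big)
$$
with $\sum_{i=1}^{d} k_i=m-t$ and $0\le a_i<b^{k_i}$ contains exactly $b^{t}$ points. Membership of a point $x$ in such an interval is determined purely by the first $k_i$ base-$b$ digits of $x_i$ for each coordinate $i$. Next I would unpack Owen's construction: for each coordinate $i$ and every finite prefix $w\in\bigcup_{k\ge 0}\{0,\ldots,b-1\}^{k}$, one draws an independent uniform permutation $\pi_{i,w}$ of $\{0,\ldots,b-1\}$, and if $x_i=\sum_{k\ge 1}x_{i,k}b^{-k}$ then the scrambled digits are defined recursively by $\tilde{x}_{i,k}=\pi_{i,(x_{i,1},\ldots,x_{i,k-1})}(x_{i,k})$. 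For any fixed realization of these permutations, the map
$$
(x_{i,1},\ldots,x_{i,k_i})\mapsto(\tilde{x}_{i,1},\ldots,\tilde{x}_{i,k_i})
$$
is a bijection of $\{0,\ldots,b-1\}^{k_i}$ onto itself, since it can be inverted digit-by-digit using the inverse permutations.

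From this bijectivity it follows directly that, for each shape $(k_1,\ldots,k_d)$, Owen's scrambling maps the family $\{E_{(k_1,\ldots,k_d),\mathbf{a}}\}_{\mathbf{a}}$ of elementary intervals onto itself by a permutation, and a point lies in $E_{(k_1,\ldots,k_d),\mathbf{a}}$ before scrambling if and only if its scrambled image lies in the uniquely determined partner interval of the same shape. Applying this to shapes with $\sum k_i=m-t$ shows that each elementary interval of volume $b^{t-m}$ still contains exactly $b^{t}$ scrambled points, proving the net claim. For the $(t,d)$-sequence statement I would then invoke the standard characterization that a sequence is a $(t,d)$-sequence in base $b$ iff for every $k\ge 0$ and every admissible $m$ the block $\{x_{kb^{m}},\ldots,x_{(k+1)b^{m}-1}\}$ is a $(t,m,d)$-net, and apply the previous conclusion block-wise; since Owen's scrambling is applied identically to each point of the sequence, the per-block argument transfers without change.

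The main obstacle is purely notational: setting up the nested-permutation definition cleanly and verifying that the induced digit map is a bijection on prefixes of every length $k_i$. Once that fact is recorded, the equidistribution property of the original net or sequence carries over by pure counting, and the ``with probability $1$'' qualifier simply absorbs the negligible event of boundary ambiguities in the base-$b$ expansion.
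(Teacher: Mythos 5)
The paper states this proposition without proof, citing Owen's original work \cite{Owen95}, and your argument is precisely the standard one: the nested permutations act bijectively on digit prefixes of each length, hence permute the elementary intervals of each fixed shape, so the counting condition defining a $(t,m,d)$-net is preserved exactly, and the sequence case follows block-by-block. The argument is correct. The one small inaccuracy is your account of the ``with probability $1$'' qualifier: the original net points are deterministic, so whether they admit two base-$b$ expansions is not a random event; the qualifier is needed because the \emph{scrambled} digit string $(\tilde{x}_{i,k})_{k\ge1}$ may, with probability zero, eventually consist only of the digit $b-1$, in which case the real number $\tilde{x}_i$ no longer lies in the half-open elementary interval prescribed by its leading digits. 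Excluding this null event (a countable union over points, coordinates, and, for sequences, blocks) is exactly what the ``probability $1$'' absorbs; with that correction your proof is complete.
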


\begin{proposition}(\textbf{Uniformity})\\
Let $\tilde{z}_i$ be the randomized version of a point $z_i$ 
originally belonging  to a  $(t,m,d)$-net in base $b$ or a 
$(t,d)$-sequence in base $b$, using Owen's scrambling. Then 
$\tilde{z}_i$ has a uniform distribution in $[0,1)^d$, 
that is, for any Lebesgue measurable set $G\subseteq[0,1)^d$, 
$P(\tilde{z}_{i}\in G)= \lambda_d(G)$, with $\lambda_d$ the 
$d$-dimensional Lebesgue measure.  
\end{proposition}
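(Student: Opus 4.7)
The plan is to show that Owen's nested random permutations transform each base-$b$ digit of the original point into an independent uniform random digit; this immediately yields a uniform distribution on $[0,1)^d$. First I would fix a coordinate $k$ and write the base-$b$ expansion $z_{i,k}=\sum_{j\ge 1}a_{i,k,j}b^{-j}$ with $a_{i,k,j}\in\{0,1,\ldots,b-1\}$. Owen's scrambling replaces $a_{i,k,j}$ by
$$\tilde a_{i,k,j}=\pi_{k,\,a_{i,k,1}\cdots a_{i,k,j-1}}(a_{i,k,j}),$$
where the permutations $\pi_{k,\sigma}$ of $\{0,1,\ldots,b-1\}$, indexed by the coordinate $k$ and by all finite digit prefixes $\sigma$, are drawn independently and uniformly from the symmetric group on $\{0,1,\ldots,b-1\}$.

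Next I would verify by induction on $j$ that the scrambled digits $(\tilde a_{i,k,j})_{j\ge 1}$ are i.i.d.\ uniform on $\{0,1,\ldots,b-1\}$. The base case is immediate: applying a uniformly random permutation to the deterministic digit $a_{i,k,1}$ yields a digit that is uniform on $\{0,1,\ldots,b-1\}$. For the inductive step, the key observation is that the permutation $\pi_{k,\,a_{i,k,1}\cdots a_{i,k,j-1}}$ is indexed by the \emph{original} (deterministic) digit prefix, so it is independent of all permutations used at levels $1,\ldots,j-1$. Conditional on any realization of $(\tilde a_{i,k,1},\ldots,\tilde a_{i,k,j-1})$, the digit $\tilde a_{i,k,j}$ is therefore again uniform and independent of the past.

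Then I would invoke the elementary fact that $\sum_{j\ge 1}U_j b^{-j}$ is uniformly distributed on $[0,1)$ whenever the $U_j$ are i.i.d.\ uniform on $\{0,1,\ldots,b-1\}$; this is checked by verifying that every $b$-adic half-open interval $[k b^{-m},(k+1)b^{-m})$ is given mass $b^{-m}$ and then extending to the full Borel $\sigma$-algebra. This yields that each coordinate $\tilde z_{i,k}$ is uniformly distributed on $[0,1)$. Since the families of permutations used for distinct coordinates $k=1,\ldots,d$ are independent by construction, the components of $\tilde z_i$ are mutually independent, and hence $\tilde z_i$ is uniform on $[0,1)^d$, so that $P(\tilde z_i\in G)=\lambda_d(G)$ for every Lebesgue measurable $G\subseteq[0,1)^d$.

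The main delicate point is ensuring that the permutations used at different digit levels (and different coordinates) are probabilistically independent of each other and of the scrambled output so far; this rests entirely on the fact that Owen's scheme indexes the permutation applied at level $j$ by the prefix of the \emph{original} digits rather than of the scrambled ones, which avoids any circular dependency. Once that bookkeeping is set up, the proof reduces to the short digit-by-digit induction above together with the standard $b$-adic representation of the uniform measure.
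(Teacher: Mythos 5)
Your argument is correct: the digit-by-digit induction, resting on the fact that the permutation applied at level $j$ is selected by the \emph{original} digit prefix and is therefore independent of the permutations used at earlier levels and in other coordinates, is exactly the standard proof of this uniformity property. Note that the paper itself offers no proof of this proposition --- it is stated as a known basic property of Owen's scrambling with a citation to Owen's original work --- so your write-up simply supplies the classical argument from that reference, and it does so without any gaps.
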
 

Note that the uniformity property stated above ensures that 
the resulting RQMC estimator $\hat{Q}_{n,d}(.)$ is unbiased.
We mention here the general results about the variance of 
a RQMC estimator $\hat{Q}_{n,d}(.)$ after Owen's random 
scrambling technique to $(t,m,d)$-nets in base $b$ for 
functions $f \in L_2([0,1]^d)$ (see \cite{Owen97}).
 
\begin{theorem}\label{t1}
Let $\tilde{z}_i$, $1\le i \le n$, be the points of a 
scrambled $(t,m,d)$-net in base $b$, and let $f$ be a 
function on $[0,1)^d$ with integral $I$ and variance 
$\sigma^2:=\int (f-I_{d}(f))^2 dz  < \infty.$ 
Let $\hat{Q}_{n,d}(f)= n^{-1}\sum_{i=1}^{n}f(\tilde{z}_i)$ 
with $n=b^m$ be the RQMC estimator. Then its variance 
${\rm Var}(\hat{Q}_{n,d}(f))$ has the properties
\[ 
{\rm Var}(\hat{Q}_{n,d}(f))=o(n^{-1})\;\text{ as } n\rightarrow\infty  
\quad\text{and}\quad {\rm Var}(\hat{Q}_{n,d}(f))\leq\frac{b^t}{n}
\left(\frac{b+1}{b-1}\right)^{d}\sigma^2.
\]
For $t=0$ we have
\[ 
{\rm Var}(\hat{Q}_{n,d}(f))\leq \frac{1}{n}\left(\frac{b}{b-1} 
\right)^{d-1}\sigma^2 \le\frac{1}{n}e\sigma^2.
\]
\end{theorem}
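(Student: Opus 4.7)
The plan is to follow Owen's original multiresolution argument. First, I would introduce a tensor-product base-$b$ Haar-like wavelet basis $\{\psi_\kappa\}$ on $[0,1)^d$, where each $\psi_\kappa$ is piecewise constant on elementary intervals of a prescribed resolution multi-index and orthogonal to coarser piecewise-constant functions. Any $f\in L_2([0,1)^d)$ then admits an expansion $f - I_d(f) = \sum_\kappa c_\kappa \psi_\kappa$ with $\sum_\kappa c_\kappa^2 = \sigma^2<\infty$, reducing the theorem to a term-by-term analysis.

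Second, I would exploit two stochastic properties of Owen's nested uniform scrambling. The uniformity property (stated in the preceding proposition) gives $\mathbb{E}[\hat Q_{n,d}(\psi_\kappa)] = I_d(\psi_\kappa) = 0$ for all nontrivial $\kappa$. A deeper property, namely conditional independence of the permutations applied to different digit positions, yields that $\hat Q_{n,d}(\psi_\kappa)$ and $\hat Q_{n,d}(\psi_{\kappa'})$ are uncorrelated whenever $\kappa\ne\kappa'$. Combined with Parseval this gives
\begin{equation*}
\mathrm{Var}(\hat Q_{n,d}(f))=\frac{1}{n}\sum_\kappa c_\kappa^2\,\Gamma_\kappa,\qquad \Gamma_\kappa := n\,\mathrm{Var}(\hat Q_{n,d}(\psi_\kappa)).
\end{equation*}

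Third comes the combinatorial heart, bounding the gain coefficients $\Gamma_\kappa$ via the $(t,m,d)$-net property. When the resolution $|\kappa|$ of $\psi_\kappa$ satisfies $|\kappa|\le m-t$, the defining property forces every elementary interval to contain exactly $b^{m-|\kappa|-t}$ points, which makes $\hat Q_{n,d}(\psi_\kappa)\equiv 0$ and thus $\Gamma_\kappa=0$. For finer resolutions one bounds $\Gamma_\kappa$ by $b^t$ times a combinatorial factor, and summing the number of wavelets at each resolution along a tensor-product enumeration produces the product bound $b^t\bigl(\tfrac{b+1}{b-1}\bigr)^d$. The improved $t=0$ constant $\bigl(\tfrac{b}{b-1}\bigr)^{d-1}$ comes from the sharper counting available when no coordinate resolution is "wasted" at level $0$, and the bound $\le e$ is then a direct elementary estimate. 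The $o(n^{-1})$ refinement follows because for each fixed $\kappa$ the contribution is eventually zero once $n=b^m$ is large enough, so dominated convergence in $\ell^2$ yields $\sum_\kappa c_\kappa^2 \Gamma_\kappa/n=o(1/n)$.

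The main obstacle, and the most delicate point of the argument, is establishing the pairwise uncorrelatedness of the $\hat Q_{n,d}(\psi_\kappa)$ under Owen's scrambling: it requires an explicit digit-level description of the random permutations and a careful case analysis according to whether the cells of $\psi_\kappa$ and $\psi_{\kappa'}$ share common ancestral digits. Once this is in place, the remaining steps reduce to systematic counting and to the net-property identity used to eliminate coarse wavelets.
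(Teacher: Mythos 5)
The paper does not actually prove this theorem: it is quoted verbatim as a known result of Owen, with a citation to \cite{Owen97}, and no argument is given in the text. Your sketch is a faithful reconstruction of the proof in that reference: the base-$b$ multiresolution (Haar-type) expansion of $f-I_d(f)$, unbiasedness plus pairwise uncorrelatedness of the wavelet estimators under nested uniform scrambling, the gain-coefficient identity ${\rm Var}(\hat Q_{n,d}(f))=n^{-1}\sum_\kappa c_\kappa^2\Gamma_\kappa$, the vanishing of $\Gamma_\kappa$ for resolutions $|\kappa|\le m-t$ by the net balance property, the combinatorial bound $b^t\left(\frac{b+1}{b-1}\right)^d$ for the rest, and the $o(n^{-1})$ claim from the fact that each fixed $\kappa$ is eventually annihilated. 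This is exactly the right plan, and you correctly identify the genuinely delicate step (the digit-level case analysis behind the uncorrelatedness and the gain-coefficient bounds), which is where the bulk of Owen's paper is spent; note also that the final estimate $\left(\frac{b}{b-1}\right)^{d-1}\le e$ in the $t=0$ case uses the constraint $b\ge d$ forced by the existence of a $(0,m,d)$-net, as the paper remarks immediately after the statement.
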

Note that the last inequality for $t=0$ above holds since in 
this case one must have $b\ge d$. If the function $f$ has 
bounded variation in the sense of Hardy and Krause 
$V_{\rm HK}(f)< \infty$, then by the equidistribution property 
stated above the classical Koksma-Hlawka inequality holds with 
probability 1 for random scrambled $(t,m,d)$-nets, therefore 
the classical discrepancy bounds for $(t,m,d)$-nets 
\cite{Nied92} lead to 
\[
{\rm Var}(\hat{Q}_{n,d}(f))=O\left(n^{-2}(\log{n})^{2(d-1)}\right).
\]
If the integrand $f$ has a mixed partial derivatives of order 
$d$ which satisfies a H\"older condition, the above rate of 
convergence can be improved to \cite{Owen97,Owen97a}
\[
{\rm Var}(\hat{Q}_{n,d}(f))=O\left(n^{-3}(\log{n})^{d-1})\right).
\]
Further improved results for functions having {\em finite
generalized Hardy and Krause variation} can be found in
\cite[Theorem 13.25]{DiPi10}. Note, however, that
distinct from (\ref{rate}) sequences of the form 
$(n^{-\alpha}(\log{n})^{d-1})$ increase as long as
$n<\exp{\frac{d-1}{\alpha}}$ and, hence, require extremely 
large sample sizes $n$ for higher dimensons $d$ to get small.

The piecewise linear convex functions arising in stochastic 
programming (see Section \ref{twostage}) do even  not
have mixed partial derivatives (in the sense of Sobolev) 
in general. They do not have finite (generalized) Hardy and
Krause variation, too. The latter is shown for the classical
Hardy and Krause variation of the special function
$f_{d}(x)=\max\{x_1 +x_2 +\cdots + x_d -\frac{1}{2},0\}$
in \cite[Proposition 17]{Owen05},
but its proof carries over to the generalized variation. Thus, 
none of the results stated or mentioned above for RQMC can be 
used to formally justify an observed root mean square error 
convergence near to $O(n^{-1})$ (see Section \ref{compexp}) for 
integrands appearing in linear two-stage stochastic programming. 

Several modifications of the original scrambling method proposed 
by Owen have been investigated in order to provide efficient 
implementations of scramblings for practical applications, see 
the survey \cite{LELe02} and \cite{Mato98,HoHi03,TeFa03,Owen08}
for example.\\
Recent QMC constructions that aim to advantage from a setting 
with even higher smoothness of the integrands are the so called 
{\em higher order digital nets} in combination with {\em higher 
order scramblings}. For further information on this topic we 
refer the reader to \cite{Bald10,DiPi10}.

\section{Integrands of linear two-stage stochastic programs}
\label{twostage}

As described in the introduction, the integrands of two-stage linear
stochastic programs with random right-hand sides are
\begin{equation}\label{int2st}
\Phi(x,\xi)=\phi(h(\xi)-Tx),
\end{equation}
where $\phi$ denotes the optimal value function assigning to each
$t\in\R^{r}$ the infimum $\phi(t)=\inf\{\langle q,y\rangle:Wy=t,y\ge
0\}$ in $\bar{\R}=\R\cup\{-\infty,+\infty\}$. Due to duality in
linear programming, the function $\phi$ is finite and 
\begin{equation}\label{primdual}
\phi(t)=\sup\{\langle t,z\rangle:W^{\top}z\le q\},
\end{equation}
if $t\in{\rm dom}\,\phi=\{t\in\R^{r}: \phi(t)<\infty\}$ and the dual
feasible set $\cD=\{z\in\R^{r}:W^{\top}z\le q\}$ is nonempty. Here,
$q\in\R^{\bar{m}}$, $W$ is a $(r,\bar{m})$-matrix and $t$ varies in
the polyhedral cone ${\rm dom}\,\phi= W(\R_{+}^{\bar{m}})$. If $\cD$
is nonempty, it is of the form
\[
\cD={\rm conv}\{v^{1},\ldots,v^{\ell}\}+({\rm dom}\,\phi)^{*},
\]
where $v^{1},\ldots,v^{\ell}$ are the vertices of $\cD$, ${\rm
conv}$ means convex hull and $({\rm dom}\,\phi)^{*}$ is the polar
cone to the cone ${\rm dom}\,\phi=W(\R^{\bar{m}}_{+})$, i.e.,
\[
({\rm dom}\,\phi)^{*}= \{d\in\R^{r}:\langle d,t\rangle\leq 0,\forall
t\in W(\R^{\bar{m}}_{+})\}=\{d\in\R^{r}:W^{\top}d\leq 0\}.
\]
Furthermore, there exist polyhedral cones $\cK_{j}$, $j=1,\ldots,
\ell$, decomposing ${\rm dom}\,\phi$. The cone $\cK_{j}$ is the
normal cone to the vertex $v^{j}$, i.e.,
\begin{eqnarray}
\cK_{j}&=&\{t\in{\rm dom}\,\phi:\langle t,z-v^{j}\rangle\le
0,\,\forall z\in\cD\}\quad(j=1,\ldots,\ell)\\
&=&\{t\in{\rm dom}\,\phi:\langle t,v^{i}-v^{j}\rangle\le 0,\,\forall
i=1,\ldots,\ell,\,i\neq j\}. \label{nc}
\end{eqnarray}
Moreover, 
\[
\phi(t)=\langle v^{j},t\rangle\quad(\forall t\in\cK_{j})\quad
\mbox{and}\quad\phi(t)=\max_{j=1,\ldots,\ell}\langle
v^{j},t\rangle\quad(\forall t\in{\rm dom}\,\phi)
\]
and $\cup_{j=1,\ldots,\ell}\,\cK_{j}={\rm dom}\,\phi$. The
intersection $\cK_{j}\cap\cK_{j'}$ for $j\neq j'$ coincides with a
common closed face of dimension less than $d$. It is a common closed
face of dimension $d-1$ iff the two cones are adjacent. In the
latter case, the intersection is contained in
\begin{equation}\label{subspace}
\{t\in{\rm dom}\,\phi:\langle t,v^{j'}-v^{j}\rangle=0\}.
\end{equation}
If there exists $k\in\{1,\ldots,d\}$ such that the $k$th components
of $v^{j}$ and $v^{j'}$ coincide, the common closed face of
$\cK_{j}$ and $\cK_{j'}$ contains at least one of the two
one-dimensional cones
\[
\{(0,\ldots,0,t_{k},0,\ldots,0):t_{k}\ge 0\}\quad\mbox{and}\quad
\{(0,\ldots,0,t_{k},0,\ldots,0):t_{k}\le 0\}.
\]
The cones $\cK_{j}$ may also be represented by
\[
\cK_{j}=\Big\{\sum_{i\in I_{j}}\lambda_{i}w^{i}:\lambda_{i}\ge
0,\,i\in I_{j}\Big\},
\]
where $w^{i}\in\R^{r}$ are the columns of $W$ and $I_{j}=\{i\in
\{1,\ldots,\bar{m}\}:\langle w^{i},v^{j}\rangle=q_{i}\}$. Each
vertex $v^{j}$ is determined by $r$ linear independent equations out
of the $\bar{m}$ equations $\langle w^{i},v\rangle=q_{i}$,
$i=1,\ldots,\bar{m}$.

In the following we assume\\
{\bf (A1)} $h(\xi)-Tx\in W(\R_{+}^{\bar{m}})$ for all $\xi\in\R^{d}$, $x\in X$
({\em relatively complete recourse}).\\
{\bf (A2)} The dual feasible set $\cD$ is nonempty ({\em dual feasibility}).\\
{\bf (A3)} $\int_{\R^{d}}\|\xi\|P(d\xi)<\infty$ ({\em finite first moment}).\\
{\bf (A4)} $P$ has a density of the form $\rho(\xi)=
\prod_{i=1}^{d}\rho_{i}(\xi_{i})$ ($\xi\in\R^{d}$), where $\rho_{i}$
is a continuous (marginal) density on $\R$, $i=1,\ldots,d$
({\em independent components}).\\
{\bf (A5)} All components of the adjacent vertices of $\cD$ are distinct, i.e.,
all common closed faces of the normal cones to two adjacent vertices of $\cD$ 
do not parallel any coordinate axis ({\em geometric condition}).\\

Conditions (A1), (A2), (A3) imply that the two-stage stochastic
program (\ref{twost}) is well defined and represents an optimization
problem with finite convex objective and polyhedral convex feasible
set. If $X$ is compact its optimal value $v(P)$ is finite and its 
solution set $S(P)$ is nonempty, closed and convex. The quantitative 
stability results \cite[Theorems 5 and 9]{Roem03} for general 
stochastic programs imply the perturbation estimate
\begin{eqnarray}\label{valest}
|v(P)-v(Q)|&\leq&L\sup_{x\in X}\Big|\int_{\R^{d}}\Phi(x,\xi)(P-Q)(d\xi)\Big|\\
\emptyset\neq S(Q)&\subseteq& S(P)+\psi_{P}^{-1}\Big(\sup_{x\in X}\Big|
\int_{\R^{d}}\Phi(x,\xi)(P-Q)(d\xi)\Big|\Big)\B,\label{solest}
\end{eqnarray}
where $\B$ is the unit ball in $\R^{m}$, $\psi_{P}$ is the growth 
function of the objective
\[
\psi_{P}(\tau)=\inf\Big\{\langle c,x\rangle+\int_{\R^{d}}\Phi(x,\xi)
P(d\xi)-v(P):d(x,S(P))\leq\tau,\,x\in X\Big\}\quad(\tau\geq 0),
\]
its inverse is defined by $\psi_{P}^{-1}(t)=\sup\{\tau\in\R_{+}:
\psi_{P}(\tau)\leq t\}$, and $Q$ is a probability measure satisfying 
(A3), too.

For further information on linear parametric programming and
two-stage stochastic programming we refer to \cite{WaWe69,NGHB74}
and \cite{RuSh03,SDR09,Wets74}.

To give an example for (\ref{twost}) we show that option pricing 
models considered as stimulating examples for the recent developments 
in QMC theory (see e.g. \cite{WaSl05,WaSl07}) may be reformulated as 
linear two-stage stochastic programs.
\begin{example}\label{option}
Let the first stage variable $x$ represent the strike price at the
expiration date $T_{e}$. The dimensions are set to $m=1$,
$\bar{m}=2$ and the matrix $W$ is set to $W=(w,-w)$ with
$w=\exp{(rT_{e})}$ and $r$ denoting the risk-free interest rate. 
The second stage program and its dual are
\begin{eqnarray*}
\min\{y_{1}:Wy=\xi-x,y\in\R^{2},y\ge 0\}\!&=&\max\{(\xi-x)z:
z\in\R,W^{\top}z\le(1,0)^{\top}\}\\
&=&\max\{(\xi-x)z:0\le wz\le 1\}.
\end{eqnarray*}
The terminal payoff is $\exp{(-rT_{e})}\max\{0,\xi-x\}$ and $v=0$
and $v=\frac{1}{w}$ are the only vertices. Taking the expectation
then leads to the optimization model
\[
\min\Big\{-x+\int_{\R}\exp{(-rT_{e})}\max\{0,\xi-x\}\rho(\xi)d\xi:x\ge
0\Big\}.
\]
for maximizing the strike price. Now, it depends on the kind of
option how the random variable $\xi$ depends on the geometric
Brownian motion $S$ given by
\[
S_{t}=S_{0}\exp{((r-\textstyle{\frac{1}{2}}\sigma^{2})t+\sigma
W_{t})}
\]
with volatility $\sigma$ and standard Brownian motion $(W_{t})_{t\ge
0}$. For example, for arithmetic Asian options one has \cite{WaFa03}
\[
\xi=\frac{1}{d}\sum_{i=1}^{d}S_{t_{i}}\quad\mbox{with}\quad
t_{i}=\frac{iT_{e}}{d},\,i=1,\ldots,d.
\]
\end{example}
Hence, in a sense, the integrands (\ref{int2st}), (\ref{primdual}) 
extend the situations encountered in such option pricing models. 
They are, however, much more involved than in Example \ref{option}.

\section{ANOVA decomposition of integrands and effective dimension}
\label{anovadec}

The analysis of variance (ANOVA) decomposition of a function was
first proposed as a tool in statistical analysis (see \cite{Hoef48}
and the survey \cite{Take83}). In \cite{Sobo69} it was first used
for the analysis of quadrature methods.

We consider a density function $\rho$ on $\R^{d}$ and assume
(A4) from Section \ref{twostage}. As in \cite{GrKS13} we consider
the weighted $\cL_{p}$ space over $\R^{d}$, i.e.,
$\cL_{p,\rho}(\R^{d})$, with the norm
$$
\|f\|_{p,\rho}=\left\{\begin{array}{cl}\Big(\int\limits_{\R^{d}}|f(\xi)|^{p}
\rho(\xi)d\xi\Big)^{\frac{1}{p}}&\mbox{ if }1\le p<+\infty,\\
\mbox{ess}\sup\limits_{\xi\in\R^{d}}\rho(\xi)|f(\xi)|&\mbox{ if
}p=+\infty.
\end{array}\right.
$$
Let $D=\{1,\ldots,d\}$ and $f\in\cL_{1,\rho}(\R^{d})$. The
projection $P_{k}$, $k\in D$, is given by
\[
(P_{k}f)(\xi):=\int_{-\infty}^{\infty}f(\xi_{1},\ldots,\xi_{k-1},
s,\xi_{k+1},\ldots,\xi_{d})\rho_{k}(s)ds\quad(\xi\in\R^{d}).
\]
Clearly, the function $P_{k}f$ is constant with respect to
$\xi_{k}$. For $u\subseteq D$ we use $|u|$ for its cardinality, $-u$
for $D\setminus u$ and write
\[
P_{u}f=\Big(\prod_{k\in u}P_{k}\Big)(f),
\]
where the product means composition. We note that the ordering
within the product is not important because of Fubini's theorem. The
function $P_{u}f$ is constant with respect to all $\xi_{k}$, $k\in
u$. Note that $P_{u}$ satisfies the properties of a projection,
namely, $P_{u}$ is linear and $P_{u}^{2}=P_{u}$.

The ANOVA decomposition of $f\in\cL_{1,\rho}(\R^{d})$ is of the
form \cite{WaFa03,KSWW10b}
\begin{equation}\label{anovdef}
f=\sum_{u\subseteq D}f_{u}
\end{equation}
with $f_{u}$ depending only on $\xi^{u}$, i.e., on the variables
$\xi_{j}$ with indices $j\in u$. It satisfies the property
$P_{j}f_{u}=0$ for all $j\in u$ and the recurrence relation
\[
f_{\emptyset}=I_{d,\rho}(f):=P_{D}(f)\quad\mbox{and}\quad
f_{u}=P_{-u}(f)-\sum_{v\subsetneq u}f_{v}\,.
\]
It is known from \cite{KSWW10b} that the ANOVA terms are given
explicitly by
\begin{equation}\label{anova}
f_{u}=\sum_{v\subseteq u}(-1)^{|u|-|v|}P_{-v}f=P_{-u}(f)+
\sum_{v\subsetneq u}(-1)^{|u|-|v|}P_{u-v}(P_{-u}(f)),
\end{equation}
where $P_{-u}$ and $P_{u-v}$ mean integration with respect to
$\xi_{j}$, $j\in D\setminus u$ and $j\in u\setminus v$,
respectively. The second representation motivates that $f_{u}$ is
essentially as smooth as $P_{-u}(f)$ due to the Inheritance Theorem
\cite[Theorem 2]{GrKS13}. The following result is well known (e.g.
\cite{WaFa03}).
\begin{proposition}\label{p1}
If $f$ belongs to $\cL_{2,\rho}(\R^{d})$, the ANOVA functions
$\{f_{u}\}_{u\subseteq D}$ are orthogonal in
$\cL_{2,\rho}(\R^{d})$.
\end{proposition}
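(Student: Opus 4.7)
The plan is to show that for any two distinct subsets $u,v \subseteq D$ the inner product $\langle f_u, f_v\rangle_{\rho} := \int_{\R^d} f_u(\xi)\,f_v(\xi)\,\rho(\xi)\,d\xi$ vanishes. The crucial structural input is the \emph{annihilation property} $P_j f_u = 0$ for every $j \in u$, which is visible directly from the explicit formula \eqref{anova} together with $P_j^2 = P_j$, and the fact that under (A4) the density $\rho$ factorizes as $\rho(\xi) = \prod_{i=1}^d \rho_i(\xi_i)$, so that each single-variable projection $P_j$ is just Fubini-integration against $\rho_j(\xi_j)\,d\xi_j$.

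First I would verify that each ANOVA term $f_u$ actually belongs to $\cL_{2,\rho}(\R^d)$, so that the inner products under discussion are well defined. By Jensen's (or Cauchy--Schwarz's) inequality applied to the probability measure $\rho_k(s)\,ds$, one has $(P_k f)^2 \le P_k(f^2)$ pointwise, and iterating gives $\|P_{-v} f\|_{2,\rho} \le \|f\|_{2,\rho}$ for every $v \subseteq D$. Since $f_u$ is a finite linear combination of such projections via \eqref{anova}, it lies in $\cL_{2,\rho}(\R^d)$. By Cauchy--Schwarz the product $f_u f_v$ is then in $\cL_{1,\rho}(\R^d)$, which justifies every application of Fubini in what follows.

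Next, given $u \neq v$, pick $j$ in the symmetric difference, say $j \in u \setminus v$. The function $f_v$ depends only on the coordinates indexed by $v$, so it is constant in $\xi_j$. Using the product structure of $\rho$, I would apply $P_j$ to the product $f_u f_v$; by Fubini and the constancy of $f_v$ in $\xi_j$,
\[
P_j(f_u f_v)(\xi) = f_v(\xi)\,(P_j f_u)(\xi) = 0,
\]
where the last equality is the annihilation property, since $j \in u$. Integrating this identity against the remaining marginals $\prod_{k \neq j}\rho_k(\xi_k)\,d\xi_k$ — again permitted by Fubini in view of the $\cL_{1,\rho}$ bound — yields $\langle f_u, f_v\rangle_{\rho} = P_D(f_u f_v) = 0$, which is the orthogonality claim.

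Conceptually there is no hard step here; the only delicate point worth being careful about is the integrability bookkeeping, i.e., making sure one is entitled to compute $P_D(f_u f_v)$ by peeling off the $P_j$-integration first. This is exactly where assumption (A4) enters: without the product form of $\rho$, the single-variable projection $P_j$ would not have a clean Fubini interpretation and the one-line vanishing argument above would fail.
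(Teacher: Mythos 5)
Your proof is correct. The paper itself does not prove Proposition \ref{p1} but cites it as well known (from the Wang--Fang reference), and your argument is exactly the canonical one: pick $j$ in the symmetric difference of $u$ and $v$, use that $f_v$ is constant in $\xi_j$ together with the annihilation property $P_j f_u=0$, and peel off the $\xi_j$-integration first via Fubini; the integrability bookkeeping via the $\cL_{2,\rho}$-contractivity of the projections $(P_kf)^2\le P_k(f^2)$ is the right way to make all of this legitimate, and it is precisely here that the product form of $\rho$ from (A4) is used.
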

We define the variance of $f$ and $f_{u}$ by
$\sigma^{2}(f)=\|f-I_{d,\rho}(f)\|_{2,\rho}^{2}$, 
$\sigma_{u}^{2}(f)=\|f_{u}\|_{2,\rho}^{2}$, and have
\[
\sigma^{2}(f)=\|f\|_{2,\rho}^{2}-(I_{d,\rho}(f))^{2}=
\sum_{\emptyset\neq u\subseteq D}\|f_{u}\|^{2}_{2,\rho}
=\sum_{\emptyset\neq u\subseteq D}\sigma_{u}^{2}(f).
\]
In the literature, the ANOVA decomposition is often considered 
for functions $g\in\cL_{1}([0,1]^{d})$. Then the projections are 
defined by 
\[
(P^{\star}_{k}g)(\upsilon):=\int_{0}^{1}g(\upsilon_{1},\ldots,\upsilon_{k-1},
s,\upsilon_{k+1},\ldots,\upsilon_{d})ds\quad(\upsilon\in [0,1]^{d})
\]
and 
\[
P^{\star}_{u}g:=\Big(\prod_{k\in u}P^{\star}_{k}\Big)(g)\quad(u\subseteq D).
\]
Similarly to the case in $\R^d$ the ANOVA decomposition of 
$g\in\cL_{1}([0,1]^{d})$ is of the form 
\[
g=\sum_{u\subseteq D}g_{u}, \quad  
g_{\emptyset}:=I_{d}(g):=P^{\star}_{D}(g)\quad\mbox{and}\quad
g_{u}:=P^{\star}_{-u}(g)-\sum_{v\subsetneq u}g_{v}
\]
with $g_{u}$ depending only on $\upsilon^{u}$, i.e., on the variables
$\upsilon_{j}$ with indices $j\in u$. Note that $P^{\star}_{u}$ is 
indeed again a projection and, assuming that $g\in \cL_{2}([0,1]^{d})$, 
the same orthogonality property (now over $\cL_{2}([0,1]^{d})$) as 
in Proposition 3.1 follows.

Assuming now for simplicity that $\rho_{j}(t)>0$ for all $t\in\R$, 
$j=1,\ldots,d$, an integrand $f\in\cL_{1,\rho}(\R^{d})$ can be
transformed into a function $g$ defined on $[0,1]^d$ by inverting
the function
\begin{equation}\label{phi}
\varphi:=(\varphi_{1},\dots,\varphi_{d}),\quad\varphi_{j}(t)
:=\int_{-\infty}^{t}\rho_{j}(\xi_{j})d\xi_{j}\quad(1\le j \le d)
\end{equation}
and by defining 
\[
g(\upsilon):=
\begin{cases}
   (f\circ\varphi^{-1})(\upsilon) & \text{if } \upsilon\in (0,1)^{d}, \\
   0       & \text{if }  \upsilon\in [0,1]^{d}\setminus (0,1)^{d}.
\end{cases} 
\]
Then the ANOVA terms $g_{u}$ of $g$ are 
\begin{equation}\label{anova01}
f_{u}(\xi^{u})=g_{u}\circ\varphi_{u}(\xi^{u})\;\text{for }
\xi^{u}\in\R^{|u|},\;\;
g_{u}(\upsilon^{u})=(f_{u}\circ\varphi_{u}^{-1})(\upsilon^{u})\; 
\text{for }\upsilon^{u}\in(0,1)^{|u|},
\end{equation}
where
\[
\varphi_{u}:=(\varphi_{j_1},\ldots,\varphi_{j_{|u|}}),\;
\varphi_{u}^{-1}:=(\varphi_{j_1}^{-1},\ldots,\varphi_{j_{|u|}}^{-1}),\; 
(j_{k} \in u,\,1\le k \le |u|, \:j_k< j_l,\,k<l).
\]
When setting $\sigma^{2}_{u}(g):=\int_{[0,1]^{|u|}} g^2_{u}(\upsilon^{u})
d\upsilon^{u}$ for $\emptyset\neq u\subseteq D$ and
$\sigma^2_{\emptyset}(g):=0$ one obtains 
$\sigma^{2}_{u}(g)=\sigma^{2}_{u}(f)$ for $u\subseteq D$.

We return to the $\R^{d}$ and assume $\sigma(f)>0$ in the following
to avoid trivial cases. The {\em normalized ratios} 
$\frac{\sigma_{u}^{2}(f)}{\sigma^{2}(f)}$ serve as indicators for 
the importance of the variable $\xi^{u}$ in $f$. They are used in 
\cite{Sobo01} to define {\em global sensitivity indices}
of a set $u\subseteq D$ by
\[
S_{u}=\frac{1}{\sigma^{2}(f)}\sum_{v\subseteq
u}\sigma_{v}^{2}(f)\quad\mbox{and}\quad\bar{S}_{u}=1-S_{-u}
=\frac{1}{\sigma^{2}(f)}\sum_{v\cap u\neq\emptyset}\sigma_{v}^{2}(f).
\]
If $\bar{S}_{u}$ is small, then the variable $\xi^{u}$ is considered
inessential for $f$ in \cite{Sobo01}.

The normalized ratios are also used in \cite{Owen03,LiOw06} to
define and study the {\em dimension distribution} of a function $f$
in two ways. The dimension distribution of $f$ in the {\em
superposition (truncation) sense} is a probability measure $\nu_{S}$
($\nu_{T}$) defined on the power set of $D$ by
\[
\nu_{S}(s):=\nu_{S}(\{s\})=\!\sum_{|u|=s}\frac{\sigma_{u}^{2}(f)}{\sigma^{2}(f)}
\;\;\Big(\nu_{T}(s)=\!\!\sum_{\max\{j:j\in u\}=s}
\frac{\sigma_{u}^{2}(f)}{\sigma^{2}(f)}\Big)\;\;(s\in D).
\]
Hence, the {\em mean dimension} in the superposition (truncation)
sense is
\begin{equation}\label{meandim}
\bar{d}_{S}=\sum\limits_{\emptyset\neq u\subseteq
D}|u|\frac{\sigma_{u}^{2}(f)}{\sigma^{2}(f)}\qquad\Big(\bar{d}_{T}
=\sum_{\emptyset\neq u\subseteq D}\!\!\max\{j:j\in
u\}\frac{\sigma_{u}^{2}(f)}{\sigma^{2}(f)}\Big).
\end{equation}
It is proved in \cite[Theorem 2]{LiOw06} that the mean dimension
$\bar{d}_{S}$ in the superposition sense is closely related to the
global sensitivity indices of subsets of $D$ containing a single
element. Namely, 
\begin{equation}\label{mdimest}
\bar{d}_{S}=\sum_{j=1}^{d}\bar{S}_{\{j\}}.
\end{equation}
The paper \cite{LiOw06} also provides a formula for the dimension
variance based on $\bar{S}_{u}$ for all subsets $u$ of $D$
containing two indices.

For small $\varepsilon\in(0,1)$ ($\varepsilon=0.01$ is suggested in
a number of papers), the {\em effective superposition (truncation)
dimension} $d_{S}(\varepsilon)\in D$ ($d_{T}(\varepsilon)\in D$) is
the $(1-\varepsilon)$-quantile of $\nu_{S}$ ($\nu_{T}$), i.e.,
\begin{eqnarray*}
d_{S}(\varepsilon)&=&\min\{s\in D:\nu_{S}(u)\geq 1-\varepsilon,|u|\leq s\}\\
d_{T}(\varepsilon)&=&\min\{s\in D:\nu_{T}(\{1,\ldots,s\})\geq
1-\varepsilon\}.
\end{eqnarray*}
Note that $d_{S}(\varepsilon)\leq d_{T}(\varepsilon)$ and 
(see \cite{WaFa03,GrHo10})
\begin{equation}\label{errdim}
\max\Big\{\Big\|f-\sum_{|u|\leq
d_{S}(\varepsilon)}f_{u}\Big\|_{2,\rho},
\Big\|f-\sum_{u\subseteq\{1,\ldots,d_{T}(\varepsilon)\}}f_{u}\Big\|_{2,\rho}
\Big\}\le\sqrt{\varepsilon}\sigma(f).
\end{equation}
Small effective superposition dimension $d_{S}(\varepsilon)$, even
if $d_{T}(\varepsilon)$ is large, suggests that we may expect
superiority of QMC over MC. We note that there exist algorithms 
based on MC or QMC to compute global sensitivity indices and 
effective dimensions approximately (see
\cite{Sobo01,WaFa03,SoKu09,WaSl05} for example).
Since the algorithms are often described for functions on $[0,1]^{d}$,
we mention that
\begin{itemize}
 \item the dimension distribution and, hence, any effective dimension
 of $f$ is the same as for $g$ given by (\ref{anova01}).
 \item The algorithm of \cite{WaFa03} for estimating the effective  
 truncation dimension can be carried out equivalently for $f$, with 
 its obvious adaption to the $\R^{d}$ setting.  
\end{itemize}

All these notions are discussed in \cite{Owen03} for different classes 
of functions, including additive and multiplicative functions. We record 
here the results for additive functions for later reference.
\begin{example}\label{e0}
For functions $f$ having separability structure, i.e.,
\[
f(\xi)=\sum_{j=1}^{d}g_{j}(\xi_{j})\quad(\xi\in\R^{d})
\]
with $g_{j}\in\cL_{2,\rho_{j}}(\R)$, $j=1,\ldots,d$, the ANOVA terms
are (see \cite{Owen03})
\[
f_{\emptyset}(\xi)=\sum_{j=1}^{d}\mu_{j},\;\;f_{\{j\}}(\xi)=g_{j}(\xi_{j})-\mu_{j},\;\;
f_{u}(\xi)=0\mbox{ if }|u|>1,
\]
where $\mu_{j}=\int_{\R}g_{j}(t)\rho_{j}(t)dt$,
$\sigma_{j}^{2}=\int_{\R}(g_{j}(t)-\mu_{j})^{2}\rho_{j}(t)dt$,
$j=1,\ldots,d$. Hence, one obtains for the global sensitivity
indices, and the mean dimension in the superposition and truncation
sense, respectively,
\begin{equation}\label{sepmeandim}
S_{\{j\}}=\frac{\sigma_{j}^{2}}{\sigma^{2}(f)},\quad\bar{d}_{S}=1\quad\mbox{and}
\quad\bar{d}_{T}=\sum_{j=1}^{d}j\Big(\frac{\sigma_{j}}{\sigma(f)}\Big)^{2},
\end{equation}
while the superposition and truncation dimensions are
\[
d_{S}(\varepsilon)=1\quad(\forall\varepsilon\in(0,1))\quad\mbox{and}\quad
d_{T}(\varepsilon)=s\quad\mbox{if
}\sum_{j=s+1}^{d}\Big(\frac{\sigma_{j}}{\sigma(f)}
\Big)^{2}\leq\varepsilon
\]
with $\sigma^{2}(f)=\sum_{j=1}^{d}\sigma_{j}^{2}$.
\end{example}
The importance of the ANOVA decomposition in the context of this
paper is also due to the fact that the functions $f_{u}$ can be
(much) smoother than the original integrand $f$ under some
conditions (see \cite{GrKS10,GrKS13} and the next section).

\section{ANOVA decomposition of linear two-stage integrands}
\label{anovatwostage}

According to Section \ref{twostage} the integrands in linear
two-stage stochastic programming map from $\R^{d}$ to $\R$ and are
given by
\begin{equation}\label{integr}
f(\xi)=f_{x}(\xi)=\max_{j=1,\ldots,\ell}\langle
v^{j},h(\xi)-Tx\rangle\quad(x\in X),
\end{equation}
where the $v^{j}$, $j=1,\ldots,\ell$, are the vertices of the
dual feasible set $\cD=\{z\in\R^{r}:W^{\top}z\leq q\}$ and 
$\cK_{j}$ are the normal cones to $v^{j}$, $j=1,\ldots,\ell$.

We assume that the affine function $h$ is of the form
$h(\xi)=(\xi,\bar{h})=(\xi,0)+(0,\bar{h})$ with some fixed element
$\bar{h}\in\R^{r-d}$. The integrands are parametrized by the
first-stage decision $x$ varying in $X$. Such functions do not 
belong to the tensor product Sobolev spaces described in Section
\ref{intro} and, in general, are not of bounded variation in the 
sense of Hardy and Krause (see \cite[Proposition 17]{Owen05}).

Next we intend to compute projections $P_{k}(f)$ for $k\in D$. Let $x\in X$
be fixed, $\xi_{i}\in\R$, $i=1,\ldots,d$, $i\neq k$, be given. We set
$\xi^{k}=(\xi_{1},\ldots,\xi_{k-1},\xi_{k+1},\ldots,\xi_{d})$
and $\xi_{s}^{k}=(\xi_{1},\ldots,\xi_{k-1},s,\xi_{k+1},\ldots,\xi_{d})$. 
We assume (A1)--(A5) and have according to Section \ref{twostage}
\[
(\xi_{s}^{k},\bar{h})-Tx\in{\rm
dom}\,\phi=\bigcup_{j=1}^{\ell}\,\cK_{j}
\]
for every $s\in\R$ and by definition of the projection
\begin{equation}\label{pkf}
(P_{k}f(\xi^{k})=\int_{-\infty}^{\infty}f(\xi_{s}^{k})\rho_{k}(s)ds
=\int_{-\infty}^{\infty}f(\xi_{1},\ldots,\xi_{k-1},s,\xi_{k+1},\ldots,\xi_{d})
\rho_{k}(s)ds.
\end{equation}
The one-dimensional affine subspace $\{(\xi_{s}^{k},\bar{h})-Tx: s\in\R\}$ 
intersects a finite number of the polyhedral cones
$\cK_{j}$. Hence, there exist $p=p(k)\in\N\cup\{0\}$, $s_{i}=s_{i}^{k}\in\R$, 
$i=1,\ldots,p$, and $j_{i}=j_{i}^{k}\in\{1,\ldots,\ell\}$,
$i=1,\ldots,p+1$, such that $s_{i}<s_{i+1}$ and
\begin{eqnarray*}
(\xi_{s}^{k},\bar{h})-Tx&\in&\cK_{j_{1}}\quad\quad\forall s\in(-\infty,s_{1}]\\
(\xi_{s}^{k},\bar{h})-Tx&\in&\cK_{j_{i}}\quad\quad\forall s\in[s_{i-1},s_{i}]\quad(i=2,\ldots,p)\\
(\xi_{s}^{k},\bar{h})-Tx&\in&\cK_{j_{p+1}}\quad\forall
s\in[s_{p},+\infty).
\end{eqnarray*}
By setting $s_{0}:=-\infty$, $s_{p+1}:=\infty$, we obtain the
following explicit representation of $P_{k}f$
\begin{equation}\label{pkf1}
(P_{k}f)(\xi^{k})=\sum_{i=1}^{p+1}\int_{s_{i-1}}^{s_{i}}
\langle v^{j_{i}},(\xi_{s}^{k},\bar{h})-Tx\rangle\rho_{k}(s)ds,
\end{equation}
where the points $s_{i}$, $i=1,\ldots,p$, satisfy the equations
\begin{eqnarray*}
0&=&\langle(\xi_{s_{i}}^{k},\bar{h})-Tx,v^{j_{i+1}}-v^{j_{i}}\rangle
=\langle(\xi_{s_{i}}^{k},0)+(0,\bar{h})-Tx,v^{j_{i+1}}-v^{j_{i}}\rangle\\
&=&\sum_{j=1\atop{j\neq k}}^{d}\xi_{j}(v_{j}^{j_{i+1}}-v_{j}^{j_{i}})
+s_{i}(v_{k}^{j_{i+1}}-v_{k}^{j_{i}})+\langle(0,\bar{h})-Tx,v^{j_{i+1}}-v^{j_{i}}\rangle
\end{eqnarray*}
according to (\ref{subspace}). By setting $w^{i}=v^{j_{i+1}}-v^{j_{i}}$ for
$i=1,\ldots,p$ and $z(x)=(0,\bar{h})-Tx$ this leads to the explicit formula
\begin{equation}\label{si}
s_{i}=s_{i}(\xi^{k},x)=\frac{1}{w_{k}^{i}}\Big[-\sum_{j=1\atop{j\neq k}}^{d}
w_{j}^{i}\xi_{j}-\langle z(x),w^{i}\rangle\Big]\quad(i=1,\ldots,p).
\end{equation}
Hence,  all $s_{i}$, $i=1,\ldots,p$, are affine functions of the remaining 
components $\xi_{j}$, $j\neq k$. The first step in our analysis consists in 
studying smoothness properties of the projection $P_{k}f$ on $\R^{d}$. 
We note that $f$ and $P_{k}f$ are finite convex functions on $\R^{d}$ and,
hence, twice differentiable almost everywhere due to Alexandroff's theorem 
(see, for example, \cite[Section 6.4]{EvGa92}). Our analysis shows that the 
integration in (\ref{pkf}) improves the smoothness properties.  

In the following, we consider a point $\xi^{k}_0 \in \mathbb{R}^{d-1}$ and an
open ball $\B_{\epsilon_0}(\xi^{k}_0)$. Assume that the ball 
$\B_{\epsilon_0}(\xi^{k}_0)$ is small enough such that the set of cones 
\[
\cK_{\epsilon}(\xi_{0}^{k}):=\left\{ \cK_{j}: \: \: \cK_{j}\cap 
\{(\xi_{s}^{k},\bar{h})-Tx: s\in\R\}\neq\emptyset\text{ for some }\xi^{k}\in\B_{\epsilon}(\xi^{k}_0)\right\} 
\]
satisfies $\cK_{\epsilon}(\xi_{0}^{k})=\cK_{\epsilon_0}(\xi_{0}^{k})$ for $0<\epsilon<\epsilon_0$. 
Thus, the relevant interception cones are fixed in a neighboring ball 
$\B_{\epsilon}(\xi^{k}_0)$ of $\xi^{k}_0$. 
We consider also the sets of intercepted cones at an arbitrary point $\xi_{}^{k} \in B_{\epsilon}(\xi^{k}_0)$ 
\[
\cK(\xi_{}^{k}):=\left\{ \cK_{j}: \: \: \cK_{j}\cap \{(\xi_{s}^{k},\bar{h})-
Tx: s\in\R\} \neq \emptyset \right\}. 
\] 
Note that each polyhedral convex cone $\cK_j$ in $\cK_{\epsilon_0}(\xi_{0}^{k})$ contains at least one point of the affine one dimensional space 
$\{(\xi_{0_s}^{k},\bar{h})-Tx: s\in\R\}$, therefore we have 
\[
\cK(\xi_{0}^{k})=\cK_{\epsilon_0}(\xi_{0}^{k}). 
\]
Moreover, since the cones $\cK_j$ are convex, the intersection of a cone 
$\cK_j$ with the affine one dimensional space
$\{(\xi_{0,s}^{k},\bar{h})-Tx: s\in\R\}$ is given either by a single point or 
by an interval. In case that the intersection is given by an interval 
$I_{\cK_j}(\xi_{0}^{k})$, we have due to (A5) that the interior of 
$I_{\cK_j}(\xi_{0}^{k})$, denoted $I^{\circ}_{\cK_j}(\xi_{0}^{k})$, contains 
only interior points of $\cK_j$ and, hence, of $\R^{d}$. This is true because 
otherwise the interval $I_{\cK_j}(\xi_{0}^{k})$ must lie in a facet of $\cK_j$, 
and this would imply that there 
is a facet that is parallel to one of the canonical basis elements $e_{k}$, 
$1\le k \le d$, in contradiction to (A5). This implies that we can partition 
the affine one-dimensional space $\{(\xi_{0,s}^{k},\bar{h})-Tx: s\in\R\}$  
by considering the intervals $(s_i,s_{i+1})$, $0\le i \le p(\xi_{0}^{k})+1$, 
such that 
\[
\{(\xi_{0,s}^{k},\bar{h})-Tx: s \in (s_i,s_{i+1}) \} \subset \cK^{\circ}_{j_i}. 
\]
Recall that $s_0=-\infty$ and $s_{p(\xi_{0}^{k})+1}=+\infty$. It follows also 
that for each point $s_i, \, 1\le i \le p(\xi_{0}^{k})$, the resulting point 
$\{(\xi_{0,{s_i}}^{k},\bar{h})-Tx\} \in \R^{r}$  satisfies 
\[
\{(\xi_{0_{s_i}}^{k},\bar{h})-Tx\}=\bigcap_{j: \cK_j \in \varLambda_{s_i} } \cK_j,
\]
for a set of cones $\varLambda_{s_i}\subset \cK(\xi_{0}^{k}) $, and that we have 
$\varLambda_{s_r}\cap \varLambda_{s_{r+1}}=\cK_{j_{r+1}}$, $\varLambda_{s_r}\cap 
\varLambda_{s_{r+n}}=\emptyset$ for $n\ge2$, and 
$\cK(\xi_{0}^{k})=\bigcup_{\, 1\le i \le p(\xi_{0}^{k})}\varLambda_{s_i}$.

Now, we are ready to state our first result on smoothness properties of $P_{k}f$.

\begin{theorem}\label{t2}
Let $k\in D$ and $x\in X$. Assume (A1)--(A5) and let $f=f_{x}$ be the 
integrand (\ref{integr}) of the linear two-stage stochastic program (\ref{twost}).
Then the $k$th projection $P_{k}f$ of $f$ is continuously differentiable on 
$\R^{d}$. $P_{k}f$ is $s$-times continuously differentiable almost everywhere 
if the density $\rho_{k}$ belongs to $C^{s-2}(\R)$ for some $s\in\N$, $s\ge 2$. 
\end{theorem}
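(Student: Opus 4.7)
The plan is to exploit the explicit piecewise representation (\ref{pkf1}) of $P_{k}f$ and differentiate it locally. Fix $\xi^{k}_{0}\in\R^{d-1}$ and work in a ball $\B_{\epsilon}(\xi^{k}_{0})$ small enough that, as established just before the statement, the set of cones $\cK(\xi^{k})$ crossed by the affine line $\{(\xi^{k}_{s},\bar{h})-Tx:s\in\R\}$ is constant. By (\ref{si}) the breakpoints $s_{i}(\xi^{k})$ are affine in $\xi^{k}$, and (A5) is exactly what guarantees $w^{i}_{k}\neq 0$, so that formula is well defined. On each subinterval $(s_{i-1}(\xi^{k}),s_{i}(\xi^{k}))$ the integrand equals the affine function $\langle v^{j_{i}},(\xi^{k}_{s},\bar{h})-Tx\rangle$ weighted by $\rho_{k}(s)$.

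For $l\neq k$ I differentiate (\ref{pkf1}) by Leibniz' rule. The crucial observation is that the two boundary contributions produced at each interior breakpoint $s_{i}$, $1\le i\le p$, cancel: there the line meets the common face of $\cK_{j_{i}}$ and $\cK_{j_{i+1}}$, on which $\phi$ is single-valued, so $\langle v^{j_{i}},(\xi^{k}_{s_{i}},\bar{h})-Tx\rangle=\langle v^{j_{i+1}},(\xi^{k}_{s_{i}},\bar{h})-Tx\rangle$, and the upper boundary term of the $i$th integral cancels the lower boundary term of the $(i+1)$th. The remaining boundary terms at $\pm\infty$ vanish using (A3). Writing $F_{k}(t)=\int_{-\infty}^{t}\rho_{k}(u)du$ with $F_{k}(-\infty)=0$ and $F_{k}(+\infty)=1$, the surviving expression is
\[
\partial_{\xi_{l}}P_{k}f(\xi^{k})=\sum_{i=1}^{p+1}v^{j_{i}}_{l}\bigl[F_{k}(s_{i}(\xi^{k}))-F_{k}(s_{i-1}(\xi^{k}))\bigr]\qquad(l\neq k),
\]
while $\partial_{\xi_{k}}P_{k}f\equiv 0$ since $P_{k}f$ does not depend on $\xi_{k}$. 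As $F_{k}$ is continuous and the $s_{i}$ are affine, this is continuous on $\B_{\epsilon}(\xi^{k}_{0})$. Continuity across points where the set of crossed cones actually changes follows because (A5) forces any disappearing cone to be crossed in a shrinking interval, so $F_{k}(s_{i})-F_{k}(s_{i-1})\to 0$ and the corresponding summand dies out continuously. Gluing the local formulas then yields $P_{k}f\in C^{1}(\R^{d})$.

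For the higher-order claim I differentiate the displayed formula further; each subsequent partial differentiation in $\xi_{l}$ either kills the constant $v^{j_{i}}_{l}$ or acts on $F_{k}(s_{i}(\xi^{k}))$, producing a factor $\rho_{k}^{(j)}(s_{i}(\xi^{k}))$ times a polynomial in the (constant) partial derivatives of the affine maps $s_{i}$. After $s-1$ such further differentiations the highest derivative of $\rho_{k}$ appearing is $\rho_{k}^{(s-2)}$, and if $\rho_{k}\in C^{s-2}(\R)$ the resulting expression is continuous on every local ball on which the cone structure is fixed, hence $P_{k}f\in C^{s}$ there. The exceptional set consists of those $\xi^{k}$ at which two breakpoints coincide, $s_{i}(\xi^{k},x)=s_{i+1}(\xi^{k},x)$; by (\ref{si}) this is a finite union of affine hyperplanes in $\R^{d-1}$, a Lebesgue null set, so $P_{k}f$ is $s$-times continuously differentiable off this null set. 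The main obstacle in carrying out the argument rigorously is to justify the vanishing of the boundary contributions at $\pm\infty$ (via (A3) and dominated convergence) and to verify that (A5) really does preclude the pathological configuration in which the affine line lies within a facet of some cone, so that all disappearing cones genuinely produce zero-length intervals at the transition points---both of which are already prepared in the discussion preceding the theorem.
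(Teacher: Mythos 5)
Your proposal follows essentially the same route as the paper: differentiate the piecewise representation (\ref{pkf1}) by Leibniz' rule on a ball where the crossed-cone structure is locally constant, observe that the boundary terms at each interior breakpoint cancel because $\langle v^{j_{i}},(\xi^{k}_{s_{i}},\bar{h})-Tx\rangle=\langle v^{j_{i+1}},(\xi^{k}_{s_{i}},\bar{h})-Tx\rangle$ on the common face, arrive at $\partial_{\xi_{l}}P_{k}f=\sum_{i}v^{j_{i}}_{l}[\varphi_{k}(s_{i})-\varphi_{k}(s_{i-1})]$, and handle transitions in the cone structure by noting that (A5) forces any extra cones to be traversed over intervals that shrink to a point, so their contributions vanish continuously. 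This matches the paper's case \textbf{P1} computation and the continuity half of its case \textbf{P2} argument, and your treatment of the higher-order claim (each further differentiation produces one more derivative of $\rho_{k}$ composed with the affine maps $s_{i}$, away from the hyperplane arrangement where breakpoints collide) is the same as the paper's (\ref{secondpartial}).

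The one place where you are thinner than the paper is the \emph{existence} of $\partial_{\xi_{l}}P_{k}f$ at the transition points themselves. Showing that the derivative, computed on the open set where the cone structure is locally constant, extends continuously to a transition point $\xi^{k}_{0}$ does not by itself prove that $P_{k}f$ is differentiable at $\xi^{k}_{0}$; ``gluing the local formulas'' needs an extra ingredient. The paper supplies this by computing one-sided difference quotients along $\xi^{k}_{0}\pm\epsilon e_{l}$ with the univariate mean value theorem and showing both one-sided derivatives equal the limiting formula. Alternatively you could invoke either the standard calculus lemma (a continuous function of one variable that is differentiable off a point and whose derivative has a limit there is differentiable at that point, by the mean value theorem) applied to the restriction of $P_{k}f$ to coordinate lines, or the convexity of $P_{k}f$ (singleton continuous extension of the gradient forces differentiability). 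You flag this as ``the main obstacle'' but do not actually discharge it, so to be a complete proof your argument needs that one additional step; the rest is sound.
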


\begin{proof} 
In the following, we consider two possible cases for a given ball 
$\B_{\epsilon_0}(\xi^{k}_0)$ satisfying the requirement described above:\\
{\bf $P1$.)} The set of intercepted cones is the same for every 
$\xi_{}^{k}\in \B_{\epsilon_0}(\xi^{k}_0)$, that is, 
\[
\cK(\xi^{k})=\cK(\xi_{0}^{k})\;\mbox{ for all }\xi^{k}\in\B_{\epsilon_0}(\xi^{k}_0). 
\]   
{\bf $P2$.)} The set of intercepted cones $\cK(\xi_{}^{k})$ varies for  
$\xi^{k} \in \B_{\epsilon_0}(\xi^{k}_0)$.\\
 
For the case {\bf $P1$.)}, we have that the limiting functions $s_{i}$ are 
differentiable over the entire neighborhood $\B_{\epsilon_0}(\xi^{k}_0)$, 
because they admit a representation as an affine function over the whole ball 
$\B_{\epsilon_0}(\xi^{k}_0)$. Thus, we obtain from (\ref{pkf1}) for any $l\in D$, 
$l\neq k$, that $P_{k}f$ is partially differentiable with respect to $\xi_{l}$ 
at $\xi^{k}$ and
\begin{eqnarray*}
\frac{\partial P_{k}f}{\partial\xi_{l}}(\xi^{k})&=&\sum_{i=1}^{p+1}
\frac{\partial}{\partial\xi_{l}}\int_{s_{i-1}}^{s_{i}}
\langle v^{j_{i}},(\xi_{s}^{k},\bar{h})-Tx\rangle\rho_{k}(s)ds\\
&=&\sum_{i=1}^{p+1}\int_{s_{i-1}}^{s_{i}}v^{j_{i}}_{l}\rho_{k}(s)ds 
+\sum_{i=1}^{p}\langle v^{j_{i}},\xi_{s_{i}}^{k},\bar{h})-Tx\rangle
\rho_{k}(s_{i})\frac{\partial s_{i}}{\partial\xi_{l}} \\
&&-\sum_{i=2}^{p+1}\langle v^{j_{i}},(\xi_{s_{i-1}}^{k},\bar{h})-Tx\rangle
\rho_{k}(s_{i-1})\frac{\partial s_{i-1}}{\partial\xi_{l}}\\
&=&\sum_{i=1}^{p+1}v^{j_{i}}_{l}\int_{s_{i-1}}^{s_{i}}\rho_{k}(s)ds 
=\sum_{i=1}^{p+1}v^{j_{i}}_{l}(\varphi_{k}(s_{i})-\varphi_{k}(s_{i-1})),
\end{eqnarray*}
where we used the identity 
$\langle v^{j_{i}},(\xi_{s_{i}}^{k},\bar{h})-Tx\rangle=
\langle v^{j_{i+1}},(\xi_{s_{i}}^{k},\bar{h})-Tx\rangle$ for each $i=1,\ldots,p$
and $\varphi_{k}$ denotes the marginal distribution function with density
$\rho_{k}$. By reordering the latter sum we have
\begin{equation}\label{firstpartial}
\frac{\partial P_{k}f}{\partial\xi_{l}}(\xi^{k})=-\sum_{i=1}^{p}w_{l}^{i}
\varphi_{k}(s_{i})+v_{l}^{j_{p+1}}
\end{equation}
Hence, the behavior of all first order partial derivatives of $P_{k}f$ only
depends on the $k$th marginal distribution function $\varphi_{k}$. The latter
are again differentiable and it follows for $r\in D$, $r\neq k$, 
\begin{equation}\label{secondpartial}
\frac{\partial^{2} P_{k}f}{\partial\xi_{r}\partial\xi_{l}}(\xi^{k})
=\sum_{i=1}^{p}\frac{w_{l}^{i}w_{r}^{i}}{w_{k}^{i}}\rho_{k}(s_{i}).
\end{equation}
Hence, $P_{k}f$ is second order continuously differentiable on the neighborhood
$\B_{\epsilon_0}(\xi^{k}_0)$. More generally, if $\rho_{k}\in C^{s-2}(\R)$ for 
some $s\in\N$, $s\ge 2$, $P_{k}f$ is $s$-times continuously differentiable on 
the neighborhood $\B_{\epsilon_0}(\xi^{k}_0)$. 

For the case {\bf $P2$.)}, we consider $\xi^{k}_1\in\B_{\epsilon_0}(\xi^{k}_0)$ 
and the corresponding projection 
\[
(P_{k}f)(\xi^{k}_1)=\sum_{i=1}^{p(\xi^{k}_1)+1}\int_{a_{i-1}(\xi^{k}_1)}^{a_{i}(\xi^{k}_1)}
\langle v^{j_{i}},(\xi_{1, s}^{k},\bar{h})-Tx\rangle\rho_{k}(s)ds.
\]
Since, as mentioned above, there is a partition of the affine one-dimensional 
space $\{(\xi_{0,s}^{k},\bar{h})-Tx: s\in\R\}$ into intervals, each one contained 
in the interior of different cones, we can consider actually $\epsilon_0$ to be 
small enough such that the affine space $\{(\xi_{1,s}^{k},\bar{h})-Tx: s\in\R\}$ 
contains intervals each one contained in the interior set of the same mentioned 
cones. Moreover, due to (A5) and the finite cone decomposition we must have 
$\{(\xi_{1,s}^{k},\bar{h})-Tx: s\in (-\infty,a_{1}(\xi^{k}_1))\}\subset\cK^{\circ}_{j_1}$ 
and $\{(\xi_{1,s}^{k},\bar{h})-Tx: s \in (a_{p(\xi^{k}_1)}(\xi^{k}_1)),+\infty)\}\subset 
\cK^{\circ}_{j_{p(\xi^{k}_0)+1}}$.
Thus we can modify our notation to write 
\begin{eqnarray*}
(P_{k}f)(\xi^{k}_1)&=&
\int_{-\infty}^{a_{1,1}(\xi^{k}_1)}\langle v^{j_{1}},(\xi_{1, s}^{k},\bar{h})-Tx\rangle\rho_{k}(s)ds\\ 
&&+\sum_{r=1}^{m_1-1}\int_{a_{1,r}(\xi^{k}_1)}^{a_{1,r+1}(\xi^{k}_1)}\langle v^{j_{1,r}},(\xi_{1, s}^{k},\bar{h})-Tx\rangle\rho_{k}(s)ds \\ 
&&+ \int_{a_{1,m_1}(\xi^{k}_1)}^{a_{2,1}(\xi^{k}_1)}\langle v^{j_{2}},
(\xi_{1, s}^{k},\bar{h})-Tx\rangle\rho_{k}(s)ds\\ 
&&+\sum_{r=1}^{m_2-1}\int_{a_{2,r}(\xi^{k}_1)}^{a_{2,r+1}(\xi^{k}_1)}\langle v^{j_{2,r}},(\xi_{1, s}^{k},\bar{h})-Tx\rangle\rho_{k}(s)ds \\ 
&& \vdots \\
&&+\int_{a_{p(\xi^{k}_0)-1,m_{p(\xi^{k}_0)-1}} (\xi^{k}_1)}^{a_{p(\xi^{k}_0),1}  (\xi^{k}_1)} \langle v^{j_{p(\xi^{k}_0)}},(\xi_{1, s}^{k},\bar{h})-Tx\rangle\rho_{k}(s)ds\\ 
&&+\sum_{r=1}^{m_{p(\xi^{k}_0)}-1}\int_{a_{p(\xi^{k}_0),r}(\xi^{k}_1)}^{a_{p(\xi^{k}_0),r+1}(\xi^{k}_1)} \langle v^{j_{p(\xi^{k}_0),r}},(\xi_{1, s}^{k},\bar{h})-Tx\rangle\rho_{k}(s)ds \\ 
&& +\int_{a_{p(\xi^{k}_0),1}(\xi^{k}_1)}^{+\infty}\langle 
v^{j_{p(\xi^{k}_0)+1}},(\xi_{1, s}^{k},\bar{h})-Tx\rangle\rho_{k}(s)ds, 
\end{eqnarray*}
where for each vertex $v^{j_{i,t}}$, $1\le t \le m_i$, we have a corresponding cone $\cK_{i,t} \in \varLambda_{s_i}$, and for simplicity we omitted in the notation the dependence of $m_i$ on $\xi^{k}_1$. The functions $a_{i,t}$, $1\le i\le p(\xi^{k}_0)$, $1\le t \le m_i$, are affine since they can be obtained through equation \eqref{si} by considering the corresponding neighboring cones belonging to 
$\cK_{s_i}$,  $1\le i \le p(\xi^{k}_0)$. Moreover, we have $a_{i,1}(\xi^{k}_0) =a_{i,m_i}(\xi^{k}_0)=s_{i}(\xi^{k}_0)$, $1\le i \le p(\xi^{k}_0)$. 
Note also that in this representation we can have $m_i=1$ for some 
$1\le i \le p(\xi^{k}_0)$, meaning  in this case that the corresponding sum of integral terms vanishes, and we only have to consider the corresponding 
limiting function $a_{i,1}$. \\
We show now the existence of partial derivatives $\frac{\partial (P_{k}f)(.)}{\partial\xi_{l}}$, $l\neq k$, at $\xi^{k}_0 $. Because no facet of the cones
is parallel to the canonical basis element $e_{l}$, we have that there exists 
$\epsilon_{l^+}$ such that $\cK(\xi^{k})=\cK(\xi^{k}_0 + e_l \epsilon_{l^+})$,
for all elements $\xi^{k}$ in the line segment $(\xi^{k}_0,\xi^{k}_0 + e_l \epsilon_{l^+}] $.  Thus, there exist corresponding continuous limiting functions, denoted by $b_{i,j}$,  $\le i \le p(\xi^{k}_0), 1\le j \le m_i$, 
that are affine, defined on $(\xi^{k}_0,\xi^{k}_0 + e_l \epsilon_{l^+}]$, and 
for which the derivative exist on the open segment $(\xi^{k}_0,\xi^{k}_0 + e_l \epsilon_{l^+})$. By using the univariate mean-value theorem we have 
for $0< \epsilon \le \epsilon_{l^+}$
\begin{eqnarray*}
&&(P_{k}f)(\xi^{k}_0 + e_l\epsilon)- (P_{k}f)(\xi^{k}_0)=\\
&&\Big( \int_{-\infty}^{b_{1,1}(\xi^{k}_0 +   e_l \mu)} v^{j_{1}}_l\rho_{k}(s)ds +
\sum_{r=1}^{m_1-1}\int_{b_{1,r}(\xi^{k}_0 +   e_l \mu)}^{b_{1,r+1}(\xi^{k}_0 +   e_l \mu)} v^{j_{1,r}}_l \rho_{k}(s)ds+
\end{eqnarray*}
\begin{eqnarray*} 
&& 
+\int_{b_{1,m_1}(\xi^{k}_0 + e_l \mu)}^{b_{2,1}(\xi^{k}_0 +   e_l \mu)} v^{j_{2}}_l\rho_{k}(s)ds +\sum_{r=1}^{m_2-1}\int_{b_{2,r}(\xi^{k}_0 +   e_l \mu)}^{b_{2,r+1}(\xi^{k}_0 +   e_l \mu)} v^{j_{2,r}}_l\rho_{k}(s)ds \\ 
& & \vdots \\
& &
+\int_{b_{p(\xi^{k}_0)-1, m_{p(\xi^{k}_0)-1}}(\xi^{k}_0 + e_l \mu)}^{b_{p(\xi^{k}_0),1}(\xi^{k}_0 + e_l \mu)}\!\!\! v^{j_{p(\xi^{k}_0)}}_l\rho_{k}(s)ds +
 \!\!\!\sum_{r=1}^{m_{p(\xi^{k}_0)}-1}\!\!\!\int_{b_{p(\xi^{k}_0),r}
 (\xi^{k}_0 + e_l \mu)}^{b_{p(\xi^{k}_0),r+1}
 (\xi^{k}_0 + e_l \mu)}\!\!\! v^{j_{p(\xi^{k}_0),r}}_l\rho_{k}(s)ds \\ 
&& +
\int_{b_{p(\xi^{k}_0), m_{p(\xi^{k}_0)}}(\xi^{k}_0 + e_l \mu)}^{+\infty}v^{j_{p(\xi^{k}_0)+1}}_l\rho_{k}(s)ds\Big)\;\epsilon,
\end{eqnarray*}
for some $0< \mu \le \epsilon$. We also have that 
\begin{eqnarray*}
&&\left|\sum_{r=1}^{m_i-1}\int_{b_{i,r}(\xi^{k}_0 + e_l \mu)}^{b_{i,r+1}(\xi^{k}_0 + e_l \mu)} v^{j_{i,r}}_l\rho_{k}(s)ds \right|\le
\int_{b_{i,1}(\xi^{k}_0 + e_l \mu)}^{b_{i,m_i}(\xi^{k}_0 + e_l \mu)} 
\max_{j: \; \cK_j \in \cK(\xi_{0}^{k})}|v^{j}_l|\rho_{k}(s)ds\\
&&\le \max_{j: \; \cK_j \in \cK(\xi_{0}^{k})}|v^{j}_l| \max_{s \in [s_a, s_b ]}
\rho_{k}(s)\left( b_{i,m_i}(\xi^{k}_0 + e_l \mu)-b_{i,1}(\xi^{k}_0 + e_l \mu) \right),
\end{eqnarray*}
where $\displaystyle s_a:= \min_{\xi \in [\xi^{k}_0,\xi^{k}_0 +   e_l \epsilon_{l^+}] }b_{i,1}(\xi)$ and 
$\displaystyle s_b:= \max_{\xi \in [\xi^{k}_0,\xi^{k}_0 +   e_l \epsilon_{l^+}] }b_{i,m_i}(\xi)$.\\
Because $b_{i,1}(\xi^{k}_0)=b_{i,m_i}(\xi^{k}_0)=s_{i}(\xi^{k}_0)$, $1\le i \le p(\xi^{k}_0)$, we can divide both sides of the above equality by $\epsilon$, 
and then take $\epsilon \downarrow 0$ to obtain
\begin{eqnarray*}
\frac{\partial (P_{k}f)(\xi^{k}_0)^+}{\partial\xi_{l}}&=&
 \int_{-\infty}^{s_{1}(\xi^{k}_0)} v^{j_{1}}_k\rho_{k}(s)ds +
\int_{s_{1}(\xi^{k}_0)}^{s_{2}(\xi^{k}_0)} v^{j_{2}}_k\rho_{k}(s)ds+ \\
& &\! \vdots \\
& &\!
 +\int_{s_{p(\xi^{k}_0)-1} (\xi^{k}_0)}^{s_{p(\xi^{k}_0)}(\xi^{k}_0)} v^{j_{p(\xi^{k}_0)}}_k\rho_{k}(s)ds +
\int_{s_{p(\xi^{k}_0)}(\xi^{k}_0)}^{+\infty}v^{j_{p(\xi^{k}_0)+1}}_k\rho_{k}(s)ds.
\end{eqnarray*}
A similar argument in the opposite direction, that is on the segment 
$(\xi^{k}_0,\xi^{k}_0 - e_l \epsilon_{l^-})$, shows that in fact we obtain
\[
\frac{\partial (P_{k}f)(\xi^{k}_0)}{\partial\xi_{l}}=\sum_{i=1}^{p(\xi^{k}_0)+1}\int_{s_{i-1}(\xi^{k}_0)}^{s_{i}(\xi^{k}_0)} v^{j_{i}}_k\rho_{k}(s)ds
=-\sum_{i=1}^{p(\xi_{0}^{k})}(v_{l}^{j_{i+1}}-v_{l}^{j_{i}})\varphi_{k}
(s_{i}(\xi_{0}^{k}))+v_{l}^{j_{p+1}} 
\]
and, hence, the same representation as in (\ref{firstpartial}).
Because this argument is valid for each point $\xi^{k}_0\in\R^{d-1}$, we have that all first order partial derivatives of $P_{k}f$ exist at each point of 
$\R^{d-1}$. Note that partial differentiability with respect to $\xi_{k}$
holds by definition.\\
To prove that a partial derivative $\frac{\partial (P_{k}f)(.)}{\partial\xi_{l}} , \: l\neq k, $ is continuous  at $\xi^{k}_0$ for the case {\bf $P2$.)}, we consider 
a sequence of points $(\xi^{k}_n)_{n\in\N} \in \B_{\epsilon_0}(\xi^{k}_0)$ converging to $\xi^{k}_0$. Then for each point 
$\xi^{k}_n \in \B_{\epsilon_0}(\xi^{k}_0)$ we have
\begin{eqnarray*}
\frac{\partial (P_{k}f)(\xi^{k}_n)}{\partial\xi_{l}}&=&
\int_{-\infty}^{a_{1,1}(\xi^{k}_n)} v^{j_{1}}_l\rho_{k}(s)ds +
\sum_{r=1}^{m_1(\xi^{k}_n)-1}\int_{a_{1,r}(\xi^{k}_n)}^{a_{1,r+1}(\xi^{k}_n)}  
v^{j_{1,r}}_l \rho_{k}(s)ds \\ 
&& +\int_{a_{1,m_1(\xi^{k}_n)}(\xi^{k}_n)}^{a_{2,1}(\xi^{k}_n)} v^{j_{2}}_l
\rho_{k}(s)ds +\sum_{r=1}^{m_2(\xi^{k}_n)-1}\int_{a_{2,r}(\xi^{k}_n)}^{a_{2,r+1}(\xi^{k}_n)} v^{j_{2,r}}_l\rho_{k}(s)ds +\\ 
&& \vdots \\
&&+\int_{a_{p_0-1, m_{p_0-1}} (\xi^{k}_n)}^{a_{p_0,1}(\xi^{k}_n)} v^{j_{p_0}}_l\rho_{k}(s)ds +\!\!\sum_{r=1}^{m_{p_0}(\xi^{k}_n) -1}\int_{a_{p_0,r}
(\xi^{k}_n)}^{a_{p_0,r+1}(\xi^{k}_n)}\! v^{j_{p_0,r}}_l\rho_{k}(s)ds \\ 
&& +\int_{a_{p_0, m_{p_0}(\xi^{k}_n)}
(\xi^{k}_n)}^{+\infty}v^{j_{p_0+1}}_l\rho_{k}(s)ds ,
\end{eqnarray*}
where we introduced the short notation $p_0:=p(\xi^{k}_0)$.  Note that if we 
have a limiting function $a(.)$ obtained trough equation \eqref{si} by two 
adjacent cones $K_{j_r},K_{j_t} \in \varLambda_{s_i} $, then we have that 
$a(.)$ is affine, and $a(\xi^{k}_0)=s_i(\xi^{k}_0)$. Moreover, by considering 
that are finite many different cones contained in $\varLambda_{s_i}$, it is 
clear that we can have at most finite many possible different limiting functions $a(.)$ that can be obtained from two adjacent cones $K_{j_r},K_{j_t}\in \varLambda_{s_i} $ by  \eqref{si}. Let us denote by $\overline{a}_{s_i}(.)$ 
the maximum, and by $\underline{a}_{s_i}(.)$ the minimum, of all such limiting functions $a(.)$ over $\B_{\epsilon_0}(\xi^{k}_0)$. Then we have that 
$\overline{a}_{s_i}(.)$ and $\underline{a}_{s_i}(.)$ are continuous on 
$\B_{\epsilon_0}(\xi^{k}_0)$. We also have that $\overline{a}_{s_i}(\xi^{k}_0)=\underline{a}_{s_i}(\xi^{k}_0)=s_i(\xi^{k}_0)$. 
Thus we have
\begin{eqnarray*}
&&\left|\sum_{r=1}^{m_i(\xi^{k}_n)-1}\int_{a_{i,r}(\xi^{k}_n)}^{a_{i,r+1}(\xi^{k}_n)} v^{j_{1,r}}_l \rho_{k}(s)ds \right|\le
\int_{\underline{a}_{s_i}(\xi^{k}_n)}^{\overline{a}_{s_i}(\xi^{k}_n)} 
\max_{j: \; \cK_j \in \cK(\xi_{0}^{k})}|v^{j}_l|\rho_{k}(s)ds\\
&&\le \max_{j: \cK_j \in \cK(\xi_{0}^{k})}|v^{j}_l| \max_{s \in [s_{i,a},s_{i,b}]}
\rho_{k}(s) \left( \overline{a}_{s_i}(\xi^{k}_n) - \underline{a}_{s_i}(\xi^{k}_n) \right),
\end{eqnarray*}
where $\displaystyle s_{i,a}:= \min_{\xi \in \overline{B}_{\epsilon_0}(\xi^{k}_0) } \underline{a}_{s_i}(\xi)$,  and 
$\displaystyle s_{i,b}:= \max_{\xi \in \overline{B}_{\epsilon_0}(\xi^{k}_0) } \overline{a}_{s_i}(\xi)$.\\
By letting $\xi^{k}_n \rightarrow \xi^{k}_0$ the the right-hand side of the 
latter inequality tends to zero. This holds for $1\le i \le p(\xi^{k}_0)$. Therefore, we obtain
\[
\lim_{n\rightarrow \infty} \frac{\partial (P_{k}f)(\xi^{k}_n)}{\partial\xi_{l}}=
\sum_{i=1}^{p(\xi^{k}_0) +1}\int_{s_{i-1}(\xi^{k}_0)}^{s_{i}(\xi^{k}_0)} v^{j_{i}}_k\rho_{k}(s)ds=
 \frac{\partial (P_{k}f)(\xi^{k}_0)}{\partial\xi_{l}},
\]
which proves continuity of $\frac{\partial (P_{k}f)()}{\partial\xi_{l}}$ under the case {\bf $P2$.)} . \\
By combining both {\bf $P1$.)} and {\bf $P2$.)} $P_{k}f$ is continuously differentiable on $\R^{d}$.
\hfill$\Box$
\end{proof}

\begin{corollary}\label{c1}
Let $\emptyset\neq u\subseteq D$ and $x\in X$. Assume (A1)--(A5).
Then the projection $P_{u}f$ is continuously differentiable on $\R^{d}$ and
second order continuously differentiable almost everywhere in $\R^{d}$.
\end{corollary}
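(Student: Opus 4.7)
The approach is to reduce to Theorem~\ref{t2} by exploiting that the projections commute. For any chosen $k\in u$, write
\[
P_u f=P_{u\setminus\{k\}}(P_k f)=\int_{\R^{|u|-1}}(P_k f)\big(\xi^{-(u\setminus\{k\})},t^{u\setminus\{k\}}\big)\!\!\prod_{j\in u\setminus\{k\}}\!\!\rho_j(t_j)\,dt^{u\setminus\{k\}},
\]
which amounts to integrating the already-smoothed function $P_k f$ against the product density. Theorem~\ref{t2} guarantees $P_k f\in C^1(\R^d)$, and its first partial derivatives, given by formula (\ref{firstpartial}), are uniformly bounded on $\R^d$ since they are finite linear combinations of the constants $v_l^{j_i},w_l^i$ multiplied by the values $\varphi_k(s_i)\in[0,1]$. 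This uniform bound supplies the domination needed to interchange the outer integration with $\partial/\partial\xi_l$, and one concludes $P_u f\in C^1(\R^d)$.

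For the second-order differentiability almost everywhere, apply Theorem~\ref{t2} with $s=2$, which is legitimate since (A4) gives $\rho_k\in C^0(\R)$. This yields that $P_k f$ is twice continuously differentiable on $\R^d\setminus N$ for a Lebesgue null set $N$, with second mixed partials given by (\ref{secondpartial}). By Fubini, for almost every $\xi^{-(u\setminus\{k\})}\in\R^{d-|u|+1}$ the slice of $N$ above it is a Lebesgue null set in $\R^{|u|-1}$, so the formula is valid on the complement of a null set in $\R^d$. A second differentiation under the integral sign then produces continuous second-order partials of $P_u f$ on the complement of a null set in $\R^d$.

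The main obstacle is carrying out this second interchange: formula (\ref{secondpartial}) features $\rho_k\bigl(s_i(\cdot)\bigr)$ composed with the affine map $s_i$ from (\ref{si}), and one must exhibit an $L^1$-dominating function in the outer $t^{u\setminus\{k\}}$-variables. Here assumption (A5) is decisive: it forces $w_k^i\neq 0$ (and indeed $w_j^i\neq 0$ for every $j\in\{1,\ldots,d\}$) for each relevant $i$, so the map $t^{u\setminus\{k\}}\mapsto s_i$ has a nonzero coefficient in at least one of the integration variables. An affine change of variables then reduces the relevant quantity to $\|\rho_{j^{*}}\|_\infty/|w^{i}_{j^{*}}|$ (or an analogous bound using $\rho_k$), which together with the boundedness of the remaining continuous marginal densities on compact sets yields the required domination. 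The rest of the argument is a routine application of the dominated convergence theorem.
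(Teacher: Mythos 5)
Your proof is correct and follows essentially the same route as the paper: both factor $P_{u}f=P_{u\setminus\{k\}}(P_{k}f)$, invoke Theorem~\ref{t2} for $P_{k}f$, and then differentiate under the remaining integrations via dominated convergence (the paper does this by citing the Leibniz theorem of \cite{GrKS13} and the boundedness of $\varphi_{k}$ and $\rho_{k}$, arriving at the explicit formulas (\ref{firstpartPu})--(\ref{secpartPu})). Your affine change of variables exploiting $w_{j}^{i}\neq 0$ from (A5) is merely a slightly more explicit way of producing the dominating function for the second-order term, so the two arguments coincide in substance.
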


\begin{proof}
If $|u|=1$ the result follows from Theorem \ref{t2}. For
$u=\{k,j\}$ with $k,j\in D$, $k\neq j$, we obtain from the Leibniz
theorem \cite[Theorem 1]{GrKS13} for $l\not\in u$ and $r\not\in u$
\begin{eqnarray*}
D_{l}P_{u}f(\xi^{u})&:=&\frac{\partial}{\partial\xi_{l}}P_{u}f(\xi^{u})=
P_{j}\frac{\partial}{\partial\xi_{l}}P_{k}f(\xi^{u})\\
D_{r}D_{l}P_{u}f(\xi^{u})&:=&\frac{\partial^{2}}{\partial\xi_{l}
\partial\xi_{r}}P_{u}f(\xi^{u})=P_{j}\frac{\partial^{2}}{\partial\xi_{l}
\partial\xi_{r}}P_{k}f(\xi^{u})
\end{eqnarray*}
and from the proof of Theorem \ref{t2}
\begin{eqnarray}\label{firstpartPu}
D_{l}P_{u}f(\xi^{u})&=&-\sum_{i=1}^{p}w_{l}^{i}\int_{\R}\varphi_{k}
(s_{i}(\xi^{k}))\rho_{j}(\xi_{j})d\xi_{j}+v_{l}^{j_{p+1}}\\
D_{r}D_{l}P_{u}f(\xi^{u})&=&\sum_{i=1}^{p}\frac{w_{l}^{i}w_{r}^{i}}{w_{k}^{i}}
\int_{\R}\rho_{k}(s_{i}(\xi^{k}))\rho_{j}(\xi_{j})d\xi_{j}.
\label{secpartPu}
\end{eqnarray}
If $u$ contains more than two elements, the integrals on the right-hand 
side become multiple integrals. In all cases, however, such an integral 
is a continuous function of the remaining variables $\xi_{i}$, 
$i\in D\setminus u$. This can be shown using Lebesgue's theorem as 
$\varphi_{k}$ and $\rho_{k}$ are continuous and bounded on $\R$. 
\hfill$\Box$
\end{proof}

The following is the main result of this section.
\begin{theorem}\label{t3}
Assume (A1)--(A5). Then all ANOVA terms of $f$ except the one of highest 
order are first order continuously differentiable on $\R^{d}$ and all second 
order partial derivatives exist are continuous except in a set of Lebesgue measure 
zero and quadratically integrable with respect to the density $\rho$. In particular, the first and second order ANOVA terms of $f$ belong to the 
tensor product Sobolev space $\cW_{2,{\rm mix}}^{(1,\ldots,1)}(\R^{d})$.
\end{theorem}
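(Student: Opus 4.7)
The plan is to combine the explicit ANOVA formula (\ref{anova}) with the smoothness of the projections already obtained in Theorem \ref{t2} and Corollary \ref{c1}, and then to verify the $L_{2}(\rho)$ bounds directly from the closed-form derivative expressions (\ref{firstpartial})--(\ref{secpartPu}).

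First I would fix $u\subsetneq D$ and write $f_{u}=\sum_{v\subseteq u}(-1)^{|u|-|v|}P_{-v}f$. Since $u\neq D$, for every $v\subseteq u$ one has $-v=D\setminus v\supseteq -u\neq\emptyset$, so each projection $P_{-v}f$ has already been integrated over at least one coordinate. When $|-v|=1$, Theorem \ref{t2} gives that $P_{-v}f\in C^{1}(\R^{d})$ and that its second-order partial derivatives exist and are continuous off a Lebesgue-null set; when $|-v|\ge 2$, the same conclusion is furnished by Corollary \ref{c1}. Since $f_{u}$ is a finite linear combination of these projections, it inherits $C^{1}(\R^{d})$-regularity and the almost-everywhere existence (with continuity) of the second-order partials.

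The nontrivial step is the quadratic integrability. The formulas (\ref{firstpartial}), (\ref{secondpartial}), (\ref{firstpartPu}), (\ref{secpartPu}) exhibit the first and second order derivatives of $P_{-v}f$ as finite sums over the pieces $i=1,\ldots,p$ traversed by the affine integration line through the polyhedral decomposition $\{\cK_{j}\}_{j=1}^{\ell}$. The key uniform bound is $p\le\ell$, which holds because a single straight line meets each $\cK_{j}$ in at most one interval; this bound is independent of the free variables. Each summand in (\ref{firstpartial}) and (\ref{firstpartPu}) is the product of a vertex-difference coordinate $w_{l}^{i}$ (hence bounded) and a distribution-function value in $[0,1]$ (or an integral of $\rho_{j}$ that also lies in $[0,1]$), so the first-order derivatives are uniformly bounded. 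For the second-order derivatives (\ref{secondpartial}) and (\ref{secpartPu}), each summand carries the extra factor $\rho_{k}(s_{i})/w_{k}^{i}$, and here (A5) is essential: the pair $(v^{j_{i}},v^{j_{i+1}})$ consists of adjacent vertices of $\cD$, so $|w_{k}^{i}|=|v_{k}^{j_{i+1}}-v_{k}^{j_{i}}|$ is bounded away from zero uniformly over the finitely many such adjacent pairs, whence the factor is bounded by $\|\rho_{k}\|_{\infty}/c$ for some $c>0$. Consequently every first- and second-order partial derivative of $f_{u}$ is a uniformly bounded function on $\R^{d}$, and since $\rho$ is a probability density such functions automatically belong to $L_{2}(\rho)$.

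For the Sobolev-space conclusion, fix $u$ with $1\le|u|\le 2$ (and $u\subsetneq D$, which is automatic whenever $d\ge 3$). The ANOVA term $f_{u}$ depends only on $\xi^{u}$, so $\partial^{|w|}f_{u}/\partial\xi^{w}\equiv 0$ whenever $w\not\subseteq u$, while for $w\subseteq u$ the mixed partial has order at most $|u|\le 2$ and is $L_{2}(\rho)$-integrable by the above. Hence $f_{u}\in\cW_{2,{\rm mix}}^{(1,\ldots,1)}(\R^{d})$. The main obstacle in the whole argument is the second-order integrability: without the geometric assumption (A5) the denominators $w_{k}^{i}$ in (\ref{secondpartial}) and (\ref{secpartPu}) could vanish and the second derivatives would fail to be locally bounded, so (A5) enters in an essential way; the uniform cap $p\le\ell$ is the other bookkeeping step one must not overlook.
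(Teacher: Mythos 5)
Your proposal follows essentially the same route as the paper: decompose $f_{u}$ via (\ref{anova}) into projections $P_{-v}f$ with $-v\neq\emptyset$, import the $C^{1}$ and a.e.\ second-order regularity from Theorem \ref{t2} and Corollary \ref{c1}, and then bound the explicit derivative formulas. Your treatment of the boundedness is in fact slightly more explicit than the paper's: you correctly isolate the two quantitative ingredients, namely the uniform cap on the number of pieces (a line meets each convex cone $\cK_{j}$ in at most one interval, so $p\le\ell$) and the fact that (A5) together with the finiteness of the set of adjacent vertex pairs bounds $|w_{k}^{i}|$ away from zero, which is exactly where the geometric condition earns its keep. You also rightly observe that boundedness plus $\rho$ being a probability density already gives $L_{2}(\rho)$, whereas the paper somewhat superfluously cites (A3) at that point.

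There is, however, one step you assert rather than prove: the passage from ``the second-order classical partials of $f_{u}$ exist a.e., are continuous off a null set, and are bounded'' to ``$f_{u}$ has weak (Sobolev) mixed second derivatives in $L_{2}(\rho)$.'' This implication is not automatic: a $C^{1}$ function whose first partial derivative fails to be absolutely continuous on lines can have a bounded a.e.-defined classical second derivative that does not represent a weak derivative (the Cantor-function phenomenon one dimension down). The paper closes this gap by noting that, by (\ref{firstpartial}) and (\ref{firstpartPu}), $D_{l}f_{\{l,r\}}(\xi_{l},\cdot)$ is a finite sum of compositions of the Lipschitz function $\varphi_{k}$ with affine maps (possibly averaged against densities), hence is itself Lipschitz in $\xi_{r}$; Lipschitz continuity then yields Sobolev differentiability with the a.e.\ classical derivative as the weak one. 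You already have all the ingredients for this (you cite the very formulas that exhibit the first-order partials as bounded combinations of $\varphi_{k}\circ s_{i}$), so the gap is readily filled, but as written the final ``Hence $f_{u}\in\cW_{2,{\rm mix}}^{(1,\ldots,1)}(\R^{d})$'' skips the absolute-continuity argument that the conclusion actually rests on.
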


\begin{proof}
According to (\ref{anova}) the ANOVA terms of $f$ are defined by
\[
f_{u}=P_{-u}(f)+\sum_{v\subsetneq u}(-1)^{|u|-|v|}P_{-v}(f)
\]
for all nonempty subsets $u$ of $D$. Hence, all ANOVA terms of $f$ for $u\neq D$
are continuously differentiable on $\R^{d}$. Second order partial derivatives
of those ANOVA terms exist and are continuous at least at those $\xi$ such that 
$(\xi,\bar{h})-Tx$ belongs to the interior of some cone $\cK_{j}$, i.e., almost
everywhere in $\R^{d}$. The non-vanishing first order partial derivatives of the second order ANOVA terms are of the form
\begin{eqnarray*}
D_{l}f_{\{l,r\}}(\xi_{l},\xi_{r})&=&D_{l}P_{D\setminus\{l,r\}}f(\xi_{l},\xi_{r})-D_{l}P_{D\setminus\{l\}}f(\xi_{l})\\
&=&-\sum_{i=1}^{p}w_{l}^{i}\int_{\R}\varphi_{k}(s_{i}(\xi^{k}))\!\prod_{i\in D
\setminus\{l,r\}\atop{i\neq k}}\!\rho_{i}(\xi_{i})d\xi^{-\{l,r\}}
-D_{l}P_{D\setminus\{l\}}f(\xi_{l})
\end{eqnarray*}
for all $l,r\in D$ and some $k\in D$. Since $\varphi_{k}$ is Lipschitz continuous,
the function $D_{l}f_{\{l,r\}}(\xi_{l},\cdot):\R\to\R$ is Lipschitz continuous,
too, and, hence, partially differentiable with respect to $\xi_{r}$ in the sense
of Sobolev (see, for example, \cite[Section 4.2.3]{EvGa92}). Furthermore, the
second order partial derivative is a bounded function (see also (\ref{secondpartial})) and due to (A3) quadratically integrable with respect to 
$\rho$. \hfill$\Box$
\end{proof}

\begin{remark}\label{r1}
The second order ANOVA approximation of $f$, i.e.,
\begin{equation}\label{secordanova}
f^{(2)}:=\sum_{|u|=1\atop{u\subseteq D}}^{2}f_{u}
\end{equation}
belongs to the tensor product Sobolev space $\cW_{2,{\rm mix}}^{(1,\ldots,1)}(\R^{d})$. 
Hence, if the effective superposition dimension is at most 2, $f^{(2)}$ is a good 
approximation of $f$ due to (\ref{errdim}) and favorable behavior of randomly shifted 
lattice rules may be expected.
\end{remark}

The following two examples show that conditions (A1)--(A5) are necessary for 
the first order continuous differentiability of projections, but, in general, 
do not imply continuity of second order partial derivatives of the projections.

\begin{example}\label{ex3}
Let $\bar{m}=3$, $d=2$, $\Xi=\R^{2}$, $P$ denote a probability distribution with independent marginal densities $\rho_i$, $i=1,2$, whose means are w.l.o.g. equal 
to $0$. We assume that (A3) is satisfied for $P$. Let the vector $q$ and matrix $W$
\[
W=\left(\begin{array}{cccc} -1 & 1 & 0 \\ 1 & 1 & -1
\end{array}\right)\qquad  q=\left(\begin{array}{c} 1 \\ 1 \\ 0
\end{array}\right)
\]
be given. Then (A1) and (A2) are satisfied and the dual feasible set
$\cD$ is
\[
\cD=\{z\in\R^{2}:W^{\top}z\le q\}=\{z\in\R^{2}:-z_{1}+z_{2}\le 1,
z_{1}+z_{2}\le 1, -z_{2}\le 0\},
\]
i.e., $\cD$ is a triangle and has the three vertices
\[
v^{1}=\left(\begin{array}{c} 1 \\ 0 \end{array}\right)\quad
v^{2}=\left(\begin{array}{c} -1 \\ 0 \end{array}\right)\quad
v^{3}=\left(\begin{array}{c} 0 \\ 1 \end{array}\right).
\]
\begin{figure}[ht]
\begin{center}
\begin{picture}(200,150)
\put(0,50){\line(1,0){200}} \put(100,50){\line(0,1){100}}
\textcolor{blue}{\put(100,0){\line(0,1){50}}
\put(100,50){\line(-1,1){90}} \put(100,50){\line(1,1){90}}}
\put(100,50){\circle*{2}} \put(60,50){\circle*{2}}
\put(100,90){\circle*{2}} \put(140,50){\circle*{2}} \thicklines
\put(60,50){\line(1,1){40}} \put(100,90){\line(1,-1){40}}
\put(60,50){\line(1,0){80}} \put(40,30){\makebox(0,0)[t]{$\cK_{2}$}}
\put(160,30){\makebox(0,0)[t]{$\cK_{1}$}}
\put(120,110){\makebox(0,0)[t]{$\cK_{3}$}}
\put(95,45){\makebox(0,0)[t]{$0$}}
\put(108,100){\makebox(0,0)[t]{$v^{3}$}}
\put(60,45){\makebox(0,0)[t]{$v^{2}$}}
\put(140,45){\makebox(0,0)[t]{$v^{1}$}}
\put(120,60){\makebox(0,0)[t]{$\cD$}}
\end{picture}
\caption{Illustration of $\cD$, its vertices $v^{j}$ and the normal
cones $\cK_{j}$ to its vertices}
\end{center}
\label{fig1}
\end{figure}
Hence, the second component of the two adjacent vertices  $v^{1}$
and $v^{2}$ coincides. According to (\ref{nc}) the normal cones
$\cK_{j}$ to $\cD$ at $v^{j}$, $j=1,2,3$, are
\begin{eqnarray*}
\cK_{1}&=&\{z\in\R^{2}:z_{1}\ge 0,z_{2}\le z_{1}\},\quad
\cK_{2}=\{z\in\R^{2}:z_{1}\le 0,z_{2}\le -z_{1}\},\\
\cK_{3}&=&\{z\in\R^{2}:z_{2}\ge z_{1},z_{2}\ge -z_{1}\}.
\end{eqnarray*}
The function $\Phi$ (see (\ref{int2st})) is of the form
\[
\phi(t)=\max_{i=1,2,3}\langle v^{i},t
\rangle=\max\{t_{1},-t_{1},t_{2}\} =\max\{|t_{1}|,t_{2}\}
\]
and the two-stage stochastic program is
\begin{equation}\label{sp2}
\min\Big\{\langle c, x\rangle+\int_{\R^{2}}\max\{|\xi_{1}-[Tx]_{1}|,
\xi_{2}-[Tx]_{2}\}\rho(\xi)d\xi:x\in X\Big\}.
\end{equation}
The ANOVA projection $P_{1}f$ is defined by
\[
(P_{1}f)(\xi_{2})=\int_{-\infty}^{+\infty}\!\!\max\{|\xi_{1}-[Tx]_{1}|,
\xi_{2}-[Tx]_{2}\}\rho_{1}(\xi_{1})d\xi_{1}\quad(\xi_{2}\in\R).
\]
For $\xi_{2}-[Tx]_{2}\le 0$ one obtains
\begin{eqnarray*}
(P_{1}f)(\xi_{2})&=&\int_{-\infty}^{+\infty}\!\!|\xi_{1}-[Tx]_{1}|\rho_{1}(\xi_{1})d\xi_{1}\\
&=&\int_{-\infty}^{+\infty}(\xi_{1}-[Tx]_{1})\rho_{1}(\xi_{1})d\xi_{1}-2\int_{-\infty}^{[Tx]_{1}}
(\xi_{1}-[Tx]_{1})\rho_{1}(\xi_{1})d\xi_{1}
\end{eqnarray*}
and in case $\xi_{2}-[Tx]_{2}\ge 0$ 
\begin{eqnarray*}
(P_{1}f)(\xi_{2})&=&\!\int_{-\infty}^{+\infty}\!\!\!|\xi_{1}-[Tx]_{1}|\rho_{1}(\xi_{1})d\xi_{1}-\!
\int_{0}^{\xi_{2}-[Tx]_{2}}\!\!\!\!\!(\xi_{1}+\xi_{2}-[Tx]_{1}-[Tx]_{2})\rho_{1}(\xi_{1})d\xi_{1}.
\end{eqnarray*}
Hence, $P_{1}f$ belongs to $C^{1}(\R)$ for all $x\in X$ if $\rho$ is continuous.\\
When calculating the ANOVA projection $P_{2}f$, notice that assumption (A5) 
is violated. We obtain
\[
(P_{2}f)(\xi_{1})=|\xi_{1}-[Tx]_{1}|\int_{-\infty}^{|\xi_{1}-[Tx]_{1}|}\rho_{2}(\xi_{2})d\xi_{2}+
\int_{|\xi_{1}-[Tx]_{1}|}^{+\infty}(\xi_{2}-[Tx]_{2})\rho_{2}(\xi_{2})d\xi_{2}
\]
and {\em $P_{2}f$ does not belong to $C^{1}(\R)$} for all $x\in X$.
\end{example}

\begin{example}\label{ex4}
Let $\bar{m}=3$, $d=2$, $P$ denote a two-dimensional probability distribution 
with independent continuous marginal densities $\rho_i$, $i=1,2$, whose means 
are w.l.o.g. equal to $0$. Again we assume that (A3) is satisfied for $P$. Let 
the vector $q$ and matrix $W$
\[
W=\left(\begin{array}{cccc} -1 & 1 & 1 \\ 1 & 1 & 3
\end{array}\right)\qquad  q=\left(\begin{array}{c} 1 \\ 1 \\ -1
\end{array}\right)
\]
be given. Then (A1) and (A2) are satisfied and the dual feasible set
$\cD$ is
\[
\cD=\{z\in\R^{2}:W^{\top}z\le q\}=\{z\in\R^{2}:-z_{1}+z_{2}\le 1,
z_{1}+z_{2}\le 1, z_{1}+3z_{2}\le -1\},
\]
i.e., $\cD$ is also a triangle and has the three vertices
\[
v^{1}=\left(\begin{array}{c} 2 \\ -1 \end{array}\right)\quad
v^{2}=\left(\begin{array}{c} -1 \\ 0 \end{array}\right)\quad
v^{3}=\left(\begin{array}{c} 0 \\ 1 \end{array}\right).
\]
Hence, both components of the vertices  $v^{j}$, $j=1,2,3$, are distinct.
This means that (A4) and (A5) are satisfied. The normal cones $\cK_{j}$ 
to $\cD$ at $v^{j}$, $j=1,2,3$, are
\begin{eqnarray*}
\cK_{1}&=&\{z\in\R^{2}:z_{1}\ge z_{2},z_{1}\ge 3z_{2}\},\quad
\cK_{2}=\{z\in\R^{2}:z_{1}\le 3z_{2},z_{2}\le -z_{1}\},\\
\cK_{3}&=&\{z\in\R^{2}:z_{2}\ge z_{1},z_{2}\ge -z_{1}\}.
\end{eqnarray*}

\begin{figure}[ht]
\begin{center}
\begin{picture}(200,150)
\put(0,50){\line(1,0){200}} \put(100,0){\line(0,1){150}}
\textcolor{blue}{\put(83,0){\line(1,3){17}}
\put(100,50){\line(-1,1){90}} \put(100,50){\line(1,1){90}}}
\put(100,50){\circle*{2}} \put(60,50){\circle*{2}}
\put(100,90){\circle*{2}} \put(180,10){\circle*{2}} 
\thicklines
\put(60,50){\line(1,1){40}} \put(100,90){\line(1,-1){80}}
\put(60,50){\line(3,-1){120}} 
\textcolor{blue}{
\put(40,35){\makebox(0,0)[t]{$\cK_{2}$}}
\put(185,35){\makebox(0,0)[t]{$\cK_{1}$}}
\put(120,110){\makebox(0,0)[t]{$\cK_{3}$}}}
\put(93,46){\makebox(0,0)[t]{$0$}}
\put(108,100){\makebox(0,0)[t]{$v^{3}$}}
\put(60,45){\makebox(0,0)[t]{$v^{2}$}}
\put(190,15){\makebox(0,0)[t]{$v^{1}$}}
\put(120,43){\makebox(0,0)[t]{$\cD$}}
\put(198,45){\makebox(0,0)[t]{$z_{1}$}}
\put(95,148){\makebox(0,0)[t]{$z_{2}$}}
\end{picture}
\vspace{0,8cm}
\caption{Illustration of $\cD$, its vertices $v^{j}$ and the normal
cones $\cK_{j}$ to its vertices}
\end{center}
\label{fig2}
\end{figure}
The function $\phi$ is of the form
\[
\phi(t)=\max_{i=1,2,3}\langle v^{i},t
\rangle=\max\{2t_{1}-t_{2},-t_{1},t_{2}\} 
\]
and the two-stage stochastic program is
\begin{equation}\label{sp3}
\min\Big\{\langle c, x\rangle+\int_{\R^{2}}\phi(\xi_{1}-[Tx]_{1},\xi_{2}-[Tx]_{2})
\rho_{1}(\xi_{1})\rho_{2}(\xi_{2})d\xi_{1}d\xi_{2}:x\in X\Big\}.
\end{equation}
Then its ANOVA projection $P_{1}f$ is given by
\[
(P_{1}f)(\xi_{2})=\int_{-\infty}^{+\infty}\max\{2(s-[Tx]_{1})-\xi_{2}+[Tx]_2,
-s+[Tx]_{1},\xi_{2}-[Tx]_{2}\}\rho_{1}(s)ds
\]
for every $\xi_{2}\in\R$. For simplicity let $x=0$. First let $\xi_{2}>0$.
\begin{eqnarray*}
(P_{1}f)(\xi_{2})&=&\int_{-\infty}^{+\infty}\max\{2s-\xi_{2},-s,\xi_{2}\}
\rho_{1}(s)ds\\
&=&\int_{-\infty}^{s_{1}}-s\rho_{1}(s)ds+\int_{s_{1}}^{s_{2}}\xi_{2}\rho_{1}(s)ds
+\int_{s_{2}}^{+\infty}(2s-\xi_{2})\rho_{1}(s)ds,
\end{eqnarray*}
where $s_{1}=s_{1}(\xi_{2})=-\xi_{2}$ and $s_{2}=s_{2}(\xi_{2})=\xi_{2}$. Hence,
\begin{eqnarray*}
(P_{1}f)(\xi_{2})&=&3\int_{\xi_{2}}^{+\infty}s\rho_{1}(s)ds+\xi_{2}\Big(
\int_{-\xi_{2}}^{\xi_{2}}\rho_{1}(s)ds-\int_{\xi_{2}}^{+\infty}\rho_{1}(s)ds\Big).
\end{eqnarray*}
Now, we compute the partial derivatives for $\xi_{2}>0$ and obtain
\begin{eqnarray*}
\frac{\partial P_{1}f}{\partial\xi_{2}}(\xi_{2})&=&-\xi_{2}(\rho_{1}(\xi_{2})
-\rho_{1}(-\xi_{2}))+(2\varphi_{1}(\xi_{2})-\varphi_{1}(-\xi_{2})-1)\\
\frac{\partial P_{1}f}{\partial\xi_{2}}(0+)&=&\varphi_{1}(0)-1\\
\frac{\partial^{2} P_{1}f}{\partial\xi_{2}^{2}}(\xi_{2})&=&
2\rho_{1}(\xi_{2})+\rho_{1}(-\xi_{2}) \,=\, 3\rho_{1}(\xi_{2})
\end{eqnarray*}
$P_{1}f$ is for $\xi_{2}>0$ $s$-times continuously differentiable if 
$\rho_{1}\in C^{s-2}(\R)$ for any $s\in\N$.\\
Now, let $\xi_{2}< 0$. Then we obtain with $s_{1}(\xi_{2})=\frac{\xi_{2}}{3}$
\begin{eqnarray*}
(P_{1}f)(\xi_{2})&=&\int_{-\infty}^{+\infty}\max\{2s-\xi_{2},-s,\xi_{2}\}
\rho_{1}(s)ds\\
&=&\int_{-\infty}^{s_{1}}-s\rho_{1}(s)ds+\int_{s_{1}}^{+\infty}(2s-\xi_{2})
\rho_{1}(s)ds\\
\frac{\partial P_{1}f}{\partial\xi_{2}}(\xi_{2})&=&-\textstyle\frac{\xi_{2}}{3}\rho_{1}(\textstyle\frac{\xi_{2}}{3})
+(\varphi_{1}(\textstyle\frac{\xi_{2}}{3})-1)+\textstyle\frac{\xi_{2}}{3}\rho_{1}(\textstyle\frac{\xi_{2}}{3})=\varphi_{1}(\textstyle\frac{\xi_{2}}{3})-1\\
\frac{\partial P_{1}f}{\partial\xi_{2}}(0-)&=&\varphi_{1}(0)-1\\
\frac{\partial^{2} P_{1}f}{\partial\xi_{2}^{2}}(\xi_{2})&=&\textstyle\frac13\rho_{1}(\textstyle\frac{\xi_{2}}{3})
\end{eqnarray*}
$P_{1}f$ is for $\xi_{2}<0$ $s$-times continuously differentiable if 
$\rho_{1}\in C^{s-2}(\R)$ for any $s\in\N$.\\
Hence, {\em $P_{1}f$ belongs to $C^{1}(\R)$}, but its second derivative is discontinuous at $\xi_{2}=0$. The same holds for $P_{2}f$.
\end{example}

\begin{remark}\label{r2} (error estimate)\\
If the assumptions of Theorem \ref{t3} are satisfied and all marginal 
densities $\rho_{j}$, $j\in D$, are positive, all ANOVA terms $g_{u}$, 
$|u|=1,\,2$, of $g$ given by (\ref{anova01}) belong to the tensor product
Sobolev space (\ref{wsob}). Then the QMC quadrature error may be estimated
as follows:
\begin{eqnarray}\nonumber
\Big|\int_{\R^{d}}f(\xi)\rho(\xi)d\xi - n^{-1}\sum_{j=1}^{n}f(\xi^{j})\Big|
&=&\Big|\int_{[0,1]^{d}}g(x)dx - n^{-1}\sum_{j=1}^{n}g(x^{j})\Big|\\
\nonumber
&\leq&\sum_{0<|u|\le d}\Big|\int_{[0,1]^{d}}g_{u}(x^{u})dx^{u}-
n^{-1}\sum_{j=1}^{n}g_{u}(x^{j})\Big|\\ \label{est3}
&\leq&\sum_{|u|=1}^{2}{\rm Disc}_{n,u}(x^{1},\ldots,x^{n})\|g_{u}\|_{\gamma}+\\ \label{est4}
&&\sum_{|u|=3}^{d}\Big|\int_{[0,1]^{d}}g_{u}(x)dx - n^{-1}\sum_{j=1}^{n}g_{u}(x^{j})\Big|,
\end{eqnarray}
where $x_{i}^{j}=\varphi_{i}(\xi_{i}^{j})\in(0,1)^{d}$, $j=1,\ldots,n$, 
$i=1,\ldots,d$, are the QMC points and ${\rm Disc}_{n,u}$ is the weighted 
$L_{2}$- discrepancy
\[
{\rm Disc}_{n,u}^{2}(x^{1},\ldots,x^{n})
=\gamma_{u}\int_{[0,1]^{|u|}}{\rm disc}_{u}^{2}(x^{u})dx^{u},
\]
where the discrepancy ${\rm disc}$ is given by
\[
{\rm disc}_{u}(x^{u})=\prod_{i\in u}x_{i}-n^{-1}
\big|\{j\in\{1,\ldots,n\}:x^{j}\in[0,x^{u})\}\big|,
\]
and $\|g_{u}\|_{\gamma}$ the weighted norm of $g_{u}$ given by
(\ref{wnorm}) in the weighted tensor product Sobolev space (\ref{wsob}).
Recalling the arguments in the introduction one may conclude that
all terms in (\ref{est3}) converge with the optimal rate (\ref{rate})
while all terms in (\ref{est4}) also converge to $0$ due to Proinov's
convergence result \cite{Proi88} (as the $g_{u}$ are continuous). In
addition, the sum (\ref{est4}) can be further estimated by
\begin{equation}\label{errterm}
\sum_{|u|=3}^{d}\Big(\int_{[0,1]^{d}}g_{u}^{2}(x)dx+n^{-1}
\sum_{j=1}^{n}g_{u}^{2}(x^{j})\Big)=\sum_{|u|=3}^{d}\Big(
\|f_{u}\|_{L_{2}}^{2}+n^{-1}\sum_{j=1}^{n}f_{u}^{2}(\xi^{j})\Big).
\end{equation}
Since (\ref{errdim}) implies $\sum_{|u|=3}^{d}\|f_{u}\|_{L_{2}}^{2}\leq
\varepsilon\sigma^{2}(f)$ if $d_{S}(\varepsilon)\leq 2$ and the second term 
on the right-hand side of (\ref{errterm}) represents a QMC approximation 
of the first term, we may conclude that the term in (\ref{est4}) is of the 
form $O(\varepsilon)$. Hence, we obtain the estimate
\begin{equation}\label{errorrate}
\Big|\int_{\R^{d}}f(\xi)\rho(\xi)d\xi - n^{-1}\sum_{j=1}^{n}f(\xi^{j})\Big|\leq
C(\delta)n^{-1+\delta}+O(\varepsilon)
\end{equation}
if the condition $d_{S}(\varepsilon)\leq 2$ is satisfied. The latter 
may eventually be achieved by applying dimension reduction techniques 
(see Section \ref{sens}).

Moreover, when recalling the results in \cite{WaSl08}, one may hope
that the convergence rate for the terms in (\ref{est3}) is even better.

Finally, we note that the constants involved in the estimate 
(\ref{errorrate}) may be chosen to be uniform with respect to $x\in X$.
Together with the perturbation estimates (\ref{valest}) and
(\ref{solest}) in Section \ref{twostage} one, hence, obtains
\begin{eqnarray*}
|v(P)-v(P_{n})|&\leq&\hat{C}(\delta)n^{-1+\delta}+O(\varepsilon),\\
S(P_{n})&\subseteq&S(P)+\psi_{P}^{-1}(\hat{C}(\delta)n^{-1+\delta}+O(\varepsilon))
\end{eqnarray*}
if $d_{S}(\varepsilon)\leq 2$. Here, $P_{n}$ is the discrete 
probability measure representing the QMC method, i.e., 
$P_{n}=n^{-1}\sum_{j=1}^{n}\delta_{\xi^{j}}$, where $\delta_{\xi}$ 
denotes the Dirac measure placing unit mass at $\xi$.
\end{remark}

\section{Orthogonal transformations and the Gaussian case}
\label{generic}

We consider the stochastic program (\ref{twost}) with
\[
\Phi(x,\xi)=\phi(h(\xi)-Tx)
\]
as in Section \ref{anovatwostage} and assume that (A1)--(A3) is
satisfied. Further we assume that $h(\xi)$ is of the form
$h(\xi)=(Q\xi,\bar{h})$ with some orthogonal $d\times d$ matrix $Q$
and with $\xi$ satisfying (A4). Then the relevant integrand is of
the form
\[
f(\xi)=\max_{j=1,\ldots,\ell}\langle v^{j},(Q\xi,\bar{h})-Tx\rangle
=\max_{j=1,\ldots,\ell}\langle\hat{Q}^{\top}v^{j},(\xi,\bar{h})-
\hat{Q}^{\top}Tx\rangle,
\]
where the $r\times r$ matrix $\hat{Q}$ is given by
\begin{equation}\label{hQ}
\hat{Q}=\left(\begin{array}{cc} Q\, & 0 \\ 0\, & I
\end{array}\right)
\end{equation}
with $I$ denoting the $(r-d)\times(r-d)$ identity matrix. Hence, the
results of Section \ref{anovatwostage} apply if the vertices
$\hat{Q}^{\top}v^{j}$, $j=1,\ldots,\ell$, of the linearly
transformed dual feasible set $\hat{Q}^{\top}\cD$ satisfy the
corresponding assumptions. The set $\hat{Q}^{\top}\cD$ may be
represented in the form
\[
\hat{Q}^{\top}\cD=\{\hat{Q}^{\top}z:W^{\top}z\le q\}=\{z\in\R^{r}:
(\hat{Q}^{\top}W)^{\top}z\le q\}.
\]
The geometric condition on the vertices is violated only if some
face of $\hat{Q}^{\top}\cD$ is parallel to some coordinate axis.
Clearly, there are only countably many orthogonal matrices $Q$ for
which this is the case.

Assume now that $\xi$ is normally distributed with zero mean and
nonsingular covariance matrix $\Sigma$. Let the nonsingular diagonal
matrix $D$ be the result of a unitary decomposition of $\Sigma$,
i.e., $D=Q\,\Sigma\,Q^{\top}$ with an orthogonal matrix $Q$. If
$h(\xi)=(\xi,\bar{h})$ enters the integrand (\ref{integr}) with
given dual feasible polyhedron $\cD$ and vertices $v^{j}$,
$j=1,\ldots,\ell$, and $\hat{Q}$ is defined as in (\ref{hQ}), the
integrand may be rewritten as
\[
f(\xi)=\max_{j=1,\ldots,\ell}\langle
\hat{Q}v^{j},(Q\xi,\bar{h})-\hat{Q}Tx\rangle.
\]
As $Q\xi$ is normal with covariance matrix $D$ and, thus, satisfies
(A4), the results of the preceding section apply when using the
transformed dual feasible set $\hat{Q}\cD$ and normal cones
$\hat{Q}^{\top}\cK_{j}$, $j=1,\ldots,\ell$, respectively. However,
given $\cD$, there are only countably many orthogonal matrices $Q$
such that the geometric condition on the vertices of $\hat{Q}\cD$ is
\underline{not} satisfied. When equipping the metric space of all
orthogonal $d\times d$ matrices with the standard norm topology, the
set of all orthogonal matrices $Q$ such that $\hat{Q}\cD$ satisfies
the algebraic condition on the vertices is {\em residual}, i.e., it 
may be represented as countable intersection of open dense subsets. 
It is said that a property is {\em generic} or holds for {\em almost 
all} elements of a metric space if it holds in a residual set.

\begin{corollary}\label{c2}
Let $x\in X$ and assume (A1)--(A3) with $h(\xi)=(\xi,\bar{h})$ with
fixed $\bar{h}\in\R^{r-d}$ to be satisfied.
\begin{itemize}
\item[(a)] The geometric condition that all components of all adjacent
vertices of $\hat{Q}\cD$ are distinct is a generic property in the
space of all $d\times d$ orthogonal matrices $Q$ where $\hat{Q}$ is
defined by (\ref{hQ}).
\item[(b)] Let $\xi$ be normally distributed with mean $m\in\R^{d}$
and nonsingular covariance matrix $\Sigma$, and let the orthogonal
matrix $Q$ be chosen such that $Q\,\Sigma\,Q^{\top}={\rm diag}
(\sigma_{1}^{2},\ldots,\sigma_{d}^{2})$. Let $\rho$ be the normal 
density with mean $m$ and covariance matrix ${\rm diag}(\sigma_{1}^{2},
\ldots,\sigma_{d}^{2})$. If $Q$ belongs to the residual set of orthogonal 
matrices satisfying the generic property, the ANOVA approximation $f^{(2)}$ 
of $f$ given by (\ref{secordanova}) belongs to the tensor product Sobolev 
space $\cW_{2,{\rm mix}}^{(1,\ldots,1)}(\R^{d})$.
\end{itemize}
\end{corollary}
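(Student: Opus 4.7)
My plan is to split the corollary into the genericity argument (a) and a change-of-variables reduction to Theorem~\ref{t3} (b).

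For part (a), I would enumerate the finitely many pairs $(v^j, v^{j'})$ of adjacent vertices of $\cD$ together with the coordinate indices $k$, and show that each ``bad'' set
\[
\mathcal{B}_{j,j',k} := \{Q \in O(d) : (\hat{Q}(v^j - v^{j'}))_k = 0\}
\]
is a closed, nowhere-dense subset of the smooth compact manifold $O(d)$. For $k \leq d$, writing $w$ for the projection of $v^j - v^{j'}$ onto the first $d$ components, the set becomes $\{Q \in O(d) : (Qw)_k = 0\}$; whenever $w \neq 0$ this is the preimage of $\{0\}$ under the submersion $Q \mapsto (Qw)_k$, hence a codimension-one smooth submanifold of $O(d)$, closed with empty interior. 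For $k > d$ the condition is independent of $Q$ and reduces to a combinatorial property of $\cD$. The finite union of these nowhere-dense closed sets is meager, so its complement is residual, which is exactly assertion (a).

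For part (b), I would exploit the orthogonal diagonalization $Q\,\Sigma\,Q^\top = \operatorname{diag}(\sigma_1^2,\ldots,\sigma_d^2) =: D$ and set $\eta := Q\xi \sim N(Qm, D)$. Diagonality of $D$ together with Gaussianity makes the components of $\eta$ independent, so the density of $\eta$ has the product form required in (A4). Using $\hat{Q}$ from (\ref{hQ}), one has $h(\xi) = (\xi,\bar h) = \hat{Q}^\top(\eta,\bar h)$, hence
\[
f(\xi) = \max_{j=1,\ldots,\ell} \langle \hat{Q} v^j, (\eta,\bar h) - \hat{Q} T x\rangle,
\]
which is precisely an integrand of the form (\ref{integr}) for the transformed dual feasible set $\hat{Q}\cD$ with vertices $\hat{Q} v^j$. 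Assumptions (A1)--(A3) transfer directly (finiteness of the first moment of any Gaussian is automatic), (A4) holds by construction, and if $Q$ is chosen from the residual set of part (a), (A5) is satisfied by $\hat{Q}\cD$. Theorem~\ref{t3} then applies to the transformed integrand and yields that $f^{(2)}$ belongs to $\cW_{2,\rm mix}^{(1,\ldots,1)}(\R^d)$.

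The main obstacle is the non-degeneracy step in (a): verifying that the linear constraint $(Qw)_k = 0$ does not hold identically on $O(d)$ when $w \neq 0$. This follows from the transitive action of $O(d)$ on the sphere of radius $\|w\|$: the $k$-th coordinate is a non-constant smooth function on that sphere, so its zero set pulls back to a proper smooth hypersurface in $O(d)$. Once this is in place, the passage from nowhere-dense subvarieties to a residual complement via the Baire category theorem is routine, and the change of variables and assumption transfer in (b) are essentially bookkeeping.
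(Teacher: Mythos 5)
Your proposal is correct, and part (b) coincides with the paper's own one-line argument: diagonalize $\Sigma$, observe that the transformed Gaussian vector has independent components so that (A4) holds, rewrite $f$ as $\max_{j}\langle\hat{Q}v^{j},(\eta,\bar{h})-\hat{Q}Tx\rangle$, and invoke Theorem \ref{t3} for the transformed dual polyhedron $\hat{Q}\cD$. For part (a), however, your route differs from the paper's in the key step, and in a way that matters. The paper justifies genericity by asserting that there are ``only countably many'' orthogonal matrices $Q$ violating the condition, and then reads off residuality from countability. That countability claim is in fact false for $d\ge 2$: as you show, for fixed adjacent vertices with $w\neq 0$ the bad set $\{Q\in O(d):(Qw)_{k}=0\}$ is a nonempty codimension-one submanifold of $O(d)$, hence uncountable. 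Your argument --- a finite union of closed, nowhere-dense hypersurfaces is meager, so the complement is residual (indeed open and dense, which is stronger than what the corollary asserts) --- is the correct repair, and the transitivity-of-the-$O(d)$-action argument you give for non-degeneracy of the constraint is exactly the right way to rule out that $(Qw)_{k}=0$ holds identically on a component of $O(d)$. Two small caveats, shared with the paper rather than specific to you: the argument silently assumes that no two adjacent vertices of $\cD$ agree in \emph{all} of their first $d$ components (your hypothesis $w\neq 0$), and the components with index $k>d$ are unaffected by $\hat{Q}$, so the condition must be read as concerning only the coordinates $k\in\{1,\ldots,d\}$ --- which is the reading actually used in Theorem \ref{t2}, where only these directions enter. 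Neither point undermines your proof; you at least make the first hypothesis explicit, which the paper does not.
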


\begin{proof}
While (a) is shown above, it remains to note for part (b) that (A4) is 
satisfied and, hence, the result follows from Theorem \ref{t3}.\hfill$\Box$
\end{proof}

\section{Sensitivity and dimension reduction of two-stage stochastic programs}
\label{sens}

In this section we discuss sensitivity and possibilities for
reducing the effective dimension of two-stage models. First, we
derive an upper bound for the global sensitivity indices
$\bar{S}_{\{i\}}$, $i=1,\ldots,d$, and the mean dimension
$\bar{d}_{S}$ in the superposition sense, respectively.
\begin{proposition}\label{p4}
Let (A1)--(A4) with $h(\xi)=(\xi,\bar{h})$ with fixed
$\bar{h}\in\R^{r-d}$ be satisfied and $\sigma_{i}^{2}$ denote the
variance of $\xi_{i}$, $i=1,\ldots,d$. Then 
\begin{eqnarray*}
\bar{S}_{\{i\}}&\le&\frac{\sigma_{i}^{2}}{\sigma^{2}(f)}\max_{j=1,\ldots,\ell}
|v_{i}^{j}|^{2}\quad(i=1,\ldots,d)\\
\bar{d}_{S}&\le&\frac{1}{\sigma^{2}(f)}\max_{j=1,\ldots,\ell}\|v^{j}\|_{\infty}^{2}
\sum_{i=1}^{d}\sigma_{i}^{2},
\end{eqnarray*}
where $v^{j}$, $j=1,\ldots,\ell$, are the vertices of the dual
polyhedron.
\end{proposition}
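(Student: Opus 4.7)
The plan is to derive both bounds from a single Lipschitz estimate on the piecewise linear integrand $f$ combined with the Sobol--Jansen identity for the total sensitivity index $\bar S_{\{i\}}$.

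First I would recall the variance identity for total indices. Letting $\xi'_{i}$ be an independent copy of $\xi_{i}$ and writing $\tilde\xi^{(i)} := (\xi_{1},\ldots,\xi_{i-1},\xi'_{i},\xi_{i+1},\ldots,\xi_{d})$, assumption (A4) together with the ANOVA orthogonality of Proposition \ref{p1} yields
\[
\sigma^{2}(f)\,\bar S_{\{i\}}
\;=\;\sum_{u\ni i}\sigma_{u}^{2}(f)
\;=\;\int_{\R^{d-1}}\!\!{\rm Var}_{\xi_{i}}\bigl[f(\xi)\bigr]\,\prod_{k\neq i}\rho_{k}(\xi_{k})\,d\xi^{-i}
\;=\;\tfrac{1}{2}\,\mathbb{E}\bigl[(f(\xi)-f(\tilde\xi^{(i)}))^{2}\bigr],
\]
where the second equality is the classical identification of $\sum_{u\ni i}\sigma_{u}^{2}(f)$ with the total-effect variance $E_{\xi^{-i}}[{\rm Var}_{\xi_{i}}[f\mid\xi^{-i}]]$ via the law of total variance, and the last equality follows by conditioning on $\xi^{-i}$ and noting that $f(\xi)$ and $f(\tilde\xi^{(i)})$ are then two i.i.d.\ copies with respect to the $i$th coordinate.

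Second, I would exploit the structure of $f$ as the maximum of finitely many affine functions of $\xi$. Using the elementary inequality $|\max_{j}a_{j}-\max_{j}b_{j}|\le\max_{j}|a_{j}-b_{j}|$ and the fact that $h(\xi)=(\xi,\bar h)$ depends linearly on $\xi$,
\[
|f(\xi)-f(\eta)|\;\le\;\max_{j=1,\ldots,\ell}\Bigl|\sum_{k=1}^{d}v_{k}^{j}(\xi_{k}-\eta_{k})\Bigr|
\qquad(\xi,\eta\in\R^{d}).
\]
Taking $\eta=\tilde\xi^{(i)}$, only the $i$th coordinate differs, so the inner sum collapses to a single term and
\[
|f(\xi)-f(\tilde\xi^{(i)})|\;\le\;|\xi_{i}-\xi'_{i}|\max_{j=1,\ldots,\ell}|v_{i}^{j}|.
\]
Squaring, taking expectation, and using $\mathbb{E}[(\xi_{i}-\xi'_{i})^{2}]=2\sigma_{i}^{2}$ (since $\xi_{i},\xi'_{i}$ are i.i.d.\ with variance $\sigma_{i}^{2}$), substitution into the first display gives
\[
\sigma^{2}(f)\,\bar S_{\{i\}}\;\le\;\sigma_{i}^{2}\max_{j=1,\ldots,\ell}|v_{i}^{j}|^{2},
\]
which is the first inequality. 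The second follows immediately by summing over $i=1,\ldots,d$, applying the identity $\bar d_{S}=\sum_{i=1}^{d}\bar S_{\{i\}}$ from (\ref{mdimest}), and using the componentwise bound $\max_{j}|v_{i}^{j}|\le\max_{j}\|v^{j}\|_{\infty}$ for each $i$.

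I do not anticipate a serious obstacle: the whole argument is elementary once the Sobol--Jansen identity is recognized, and the piecewise linearity of $f$ is what makes the Lipschitz estimate sharp in terms of the coefficients $v_{i}^{j}$. The only nonroutine check is that all expectations involved are finite, which is automatic since the Lipschitz bound together with finiteness of the $\sigma_{i}^{2}$ (implicit in the statement) gives $f\in\cL_{2,\rho}(\R^{d})$.
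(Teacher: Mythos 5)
Your proof is correct, and it takes a genuinely different route from the paper's. The paper argues via derivatives: on the interior of each cone $\cK_{j}$ the integrand is affine with $\partial f/\partial\xi_{i}=v_{i}^{j}$, so the a.e.\ partial derivative is piecewise constant and bounded by $\max_{j}|v_{i}^{j}|$, and the bound on $\bar S_{\{i\}}$ is then obtained by invoking the derivative-based sensitivity estimate of Sobol' and Kucherenko \cite[Theorem 3]{SoKu09}; the bound on $\bar d_{S}$ follows from (\ref{mdimest}) exactly as in your argument. You instead combine the pick-freeze (Jansen) identity $\sigma^{2}(f)\bar S_{\{i\}}=\tfrac12\,\mathbb{E}\bigl[(f(\xi)-f(\tilde\xi^{(i)}))^{2}\bigr]$ — which is legitimate here since $\sum_{u\ni i}\sigma_{u}^{2}(f)=\mathbb{E}_{\xi^{-i}}\bigl[{\rm Var}_{\xi_{i}}[f\mid\xi^{-i}]\bigr]$ by the orthogonality of Proposition \ref{p1} — with the coordinatewise Lipschitz estimate $|f(\xi)-f(\tilde\xi^{(i)})|\le|\xi_{i}-\xi_{i}'|\max_{j}|v_{i}^{j}|$ coming from $|\max_{j}a_{j}-\max_{j}b_{j}|\le\max_{j}|a_{j}-b_{j}|$. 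The two proofs produce the same constants because the paper also ends up bounding the gradient by its supremum, but yours is self-contained: it needs no external theorem, no appeal to a.e.\ differentiability, and no Poincar\'e-type inequality for the marginals (which is what underlies the derivative-based bound and, in general, introduces a distribution-dependent constant rather than $\sigma_{i}^{2}$). The only hypothesis you tacitly add — finiteness of the second moments $\sigma_{i}^{2}$ and hence $f\in\cL_{2,\rho}(\R^{d})$ — is equally implicit in the statement and in the paper's own proof, so this is not a gap.
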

\begin{proof}
We use \cite[Theorem 3]{SoKu09} and compute the partial derivatives
of $f$ with respect to $\xi_{i}$, $i=1,\ldots,d$, which exist almost
everywhere on $\R^{d}$. If $h(\xi)-Tx$ belongs to the cone
$\cK_{j}$, then
\[
f(\xi)=\sum_{i=1}^{d}v_{i}^{j}(\xi_{i}-[Tx]_{i})+\sum_{i=d+1}^{r}v_{i}^{j}
(\bar{h}_{i}-[Tx]_{i}),
\]
where $x\in X$ is fixed. We obtain for $\xi\in\R^{d}$ such that
$h(\xi)-Tx$ belongs to the interior of $\cK_{j}$ that
\[
\frac{\partial f}{\partial \xi_{i}}=v_{i}^{j}.
\]
Hence, the partial derivative is piecewise constant and may be
bounded from above by $\max_{j=1,\ldots,\ell}|v_{i}^{j}|$. Using
\cite[Theorem 3]{SoKu09} this proves our estimate for the global
sensitivity index $\bar{S}_{\{i\}}$. The second estimate is a
consequence of formula (\ref{mdimest}).\hfill$\Box$
\end{proof}
Proposition \ref{p4} indicates that the importance of variable
$i$ on $f$ gets lower if $\sigma_{i}$ gets smaller.

If $\xi$ is normal with nonsingular covariance matrix $\Sigma$, the
{\em standard} (lower triangular) Cholesky matrix $L_{C}$ performing
the factorization $\Sigma=L_{C}L_{C}^{\top}$ seems to assign the same
importance to every variable and, hence, is not suitable to reduce the
effective dimension (at least in the truncation sense). This fact is
confirmed in our numerical experiments (see Section \ref{compexp}).

A universal principle for dimension reduction in the normal case is
{\em principal component analysis} (PCA). It is universal in the
sense that it does not depend on the structure of the underlying
integrand $f$. The basic idea of PCA is to determine the best mean 
square approximation of the form $\sum_{i=1}^{d}v_{i}z_{i}$ to a 
$d$-dimensional normal random vector $\xi$, where $v_{i}\in\R^{d}$,
$i=1,\ldots,d$, and $(z_{1},\ldots,z_{d})$ is normal with mean 
$0$ and covariance matrix $I$. The solution is 
$v_{i}=\sqrt{\lambda_{i}}u_{i}$ and 
$z_{i}=(\sqrt{\lambda_{i}})^{-1}u_{i}^{\top}\xi$, where
$\lambda_{1}\ge\cdots\ge\lambda_{d}>0$ are the eigenvalues of
$\Sigma$ in decreasing order and $u_{i}$, $i=1,\ldots,d$, the
corresponding orthonormal eigenvectors (see \cite{WaSl11}). Hence, 
PCA consists in using the factorization
\[
\Sigma=U_{P}\,U_{P}^{\top}\quad\mbox{or}\quad\Sigma=(u_{1},\ldots,u_{d})
{\rm diag}(\lambda_{1},\ldots,\lambda_{d})(u_{1},\ldots,u_{d})^{\top},
\]
where $U_{P}=(\sqrt{\lambda_{1}}u_{1},\ldots,\sqrt{\lambda_{d}}u_{d})$.
Several authors report an enormous reduction of the effective 
truncation dimension in financial models if PCA is used (see, for 
example, \cite{WaFa03,WaSl05,WaSl07}). We observed the same effect 
in our numerical experiments (see Section \ref{compexp}). However, 
the reduction effect certainly depends on the eigenvalues of 
$\Sigma$. If the ratio $\frac{\lambda_{1}}{\lambda_{d}}$ is close 
to $1$, the performance of PCA gets worse. Nevertheless we recommend 
to use first PCA and to resort to other ideas only after its failure.

Several other {\em dimension reduction techniques} exploit the fact 
that a normal random vector $\xi$ with mean $\mu$ and covariance matrix 
$\Sigma$ can be transformed by $\xi=B\eta+\mu$ and \underline{any} matrix 
$B$ satisfying $\Sigma=B\,B^{\top}$ into a standard normal random vector 
$\eta$ with independent components. The choice of $B$ may change the
QMC error and the effective dimension of the integrand $f_{x}$. 
As observed in \cite{Papa02,WaSl11}, however, there is no consistent 
dimension reduction effect for any such matrix $B$. This means that
a specific choice of the matrix $B$ may result in a dimension reduction 
for one integrand, but eventually not for another one. 

The following observation is seemingly due to \cite{Papa02}, too 
(see also \cite[Lemma 1]{WaSl11}).

\begin{proposition}\label{p5}
Let $\Sigma$ be a $d\times d$ nonsingular covariance matrix and $A$
be a fixed $d\times d$ matrix such that $A\,A^{\top}=\Sigma$. Then
$\Sigma=B\,B^{\top}$ if and only if $B$ is of the form
$B=A\,Q$ for some orthogonal $d\times d$ matrix $Q$.
\end{proposition}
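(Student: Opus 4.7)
The plan is to verify the two directions of the equivalence by direct computation, using only the fact that $\Sigma$ nonsingular forces $A$ (and any such $B$) to be invertible.

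For the easy direction, I would first suppose $B=AQ$ with $Q$ orthogonal and simply compute
\[
BB^{\top}=AQ(AQ)^{\top}=AQQ^{\top}A^{\top}=AA^{\top}=\Sigma,
\]
using $QQ^{\top}=I$.

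For the converse, the idea is to define the candidate orthogonal matrix explicitly as $Q:=A^{-1}B$ and then verify two things: that this makes sense, and that $Q$ is orthogonal. To see $A$ is invertible, note that $\det(\Sigma)=\det(A)\det(A^{\top})=\det(A)^{2}\neq 0$, so $A$ is nonsingular and $A^{-1}$ exists. Then $B=AQ$ by construction, and
\[
QQ^{\top}=A^{-1}B\,B^{\top}(A^{-1})^{\top}=A^{-1}\,\Sigma\,(A^{\top})^{-1}=A^{-1}(AA^{\top})(A^{\top})^{-1}=I,
\]
so $Q$ is orthogonal.

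There is really no obstacle here; the statement is a standard fact about square roots of a positive definite matrix modulo the orthogonal group. The only point that deserves a line of justification is why $A$ is invertible, which follows immediately from $\Sigma$ being nonsingular, and symmetrically $B$ is then automatically invertible (so one could equivalently have defined $Q=A^{-1}B$ or $Q^{\top}=B^{-1}A$, both yielding the same orthogonal matrix).
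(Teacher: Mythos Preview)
Your argument is correct. The paper does not actually supply a proof of this proposition; it simply attributes the observation to \cite{Papa02} (see also \cite[Lemma~1]{WaSl11}) and states it without justification. Your direct computation---checking $BB^{\top}=\Sigma$ when $B=AQ$, and conversely setting $Q:=A^{-1}B$ after noting $\det(A)^{2}=\det(\Sigma)\neq 0$---is the standard route and is entirely adequate.
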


To apply the proposition, one may choose $A=L_{C}$ since computing
the standard Cholesky matrix $L_{C}$ requires only $\frac{1}{6}d^{3}$ 
operations. Then any other decomposition matrix $B$ with 
$\Sigma=B\,B^{\top}$ is of the form $B=L_{C}\,Q$ with some 
orthogonal matrix $Q$. The approach proposed in \cite{ImTa04} for 
linear functions $f(\xi)=w^{\top}\xi + a$ consists in determining 
a {\em good} orthogonal matrix $Q$ by minimizing the mean 
truncation dimension (\ref{meandim}). This approach is extended 
in \cite{WaSl11} to functions $f$ of the form
$$
f(\xi)=G(w_{1}^{\top}\xi+a_{1},\ldots, w_{\ell}^{\top}\xi+a_{\ell})
$$
for some function $G$ and $w,\,w_{i}\in\R^{d}$, $a,\,a_{i}\in\R$, 
$i=1,\ldots,\ell$.
The latter is applicable to linear two-stage integrands if the function 
$G$ is chosen as $G(t_{1},\ldots,t_{\ell})=\max\{t_{1},\ldots,t_{\ell}\}$ 
and $w_{i}$ contains the first $d$ components of the vertex $v^{i}$ of the 
dual feasible set $\cD$ (see Proposition \ref{p4}). Of course, applying 
the orthogonalization techniques developed in \cite{WaSl11} to two-stage 
integrands is not straightforward since the vertices $v^{j}$ of $\cD$ are 
not known in general and the computation of all of them is too expensive.
So, its application to two-stage stochastic programs requires further
work.

For general (non-normal) random vectors $\xi$ the influence (of
groups) of variables and the computation of effective dimensions are
studied, e.g., in \cite{DrHo06,Sobo01,SoKu09,WaFa03}.

\section{Numerical experiments}
\label{compexp}

For our tests we consider a two-stage stochastic production planning problem 
which consists in minimizing costs of a company. The company aims to satisfy 
stochastic demands $\xi_{t}$ in a time horizon $\{1,\ldots,T\}$ with 
multivariate probability distribution $P$ (on $\R^{T}$), but its 
production capacity based on $I$ company owned units does eventually not
suffice to cover the demand. Hence, it has to buy the necessary amounts
from other $m=m_1+m_2$ providers or markets at fixed prices $\bar{c}_{1,j_1,t}$ 
and $\bar{c}_{2,j_2,t}$, $t=1,\ldots,T, 1 \le j_1 \le m_1, 1 \le j_2 \le m_2$, 
and aims at minimizing the expected costs.

The optimization model is of the form
\[
\min_{x\in\mathbb{R}^{IT}}\Big\{\sum_{t=1}^{T} \sum_{i=1}^{I} c_{i,t}\,x_{i,t}  +
\int_{\R^{T}}\Phi(x,\xi) P(d\xi)\,:\,x\in X \Big\},
\]
where the recourse costs $\Phi$ are given by
\[
\Phi(x,\xi)=\min_{y\in\R^{(m_1+m_2)T}}\Big\{\sum_{t=1}^{T}
\Big(\sum_{j_1=1}^{m_1}\bar{c}_{1,j_1,t}\,y_{j_1,t} + \sum_{j_2=1}^{m_2}\bar{c}_{2,j_2,t}\,y_{m_1+j_2,t} 
\Big):y\in Y(x,\xi)\Big\},
\]
with the polyhedral constraint sets
\[
X:=\left\{x\in\R^{IT}\left|
\begin{aligned}
& \,a_{i,t} \le x_{i,t} \le b_{i,t} \,,i=1,\dots,I , t=1,\dots,T \\
& \, | x_{i,t} - x_{i,t+1}| \le \delta_{i,t} \,,i=1,\dots,I ,t=1,\dots,T-1
\end{aligned}
\right\}\right.,
\]
and
\[
Y(x,\xi):=\left\{y\in\R^{mT}\left|
\begin{aligned}
& \sum_{i=1}^{I} x_{i,t} + \sum_{j=1}^{m_1+m_2} y_{j,t} \ge \xi_t \,,\\
& w_{1,j_1,t} \le y_{j_1,t} \le z_{1,j_1,t}\,,j_1=1,\dots,m_1\\
& w_{2,j_2,t} \le y_{m_1+j_2,t}\,,j_2=1,\dots,m_2\\
& (t=1,\dots,T) \\
& | y_{j_1,t} - y_{j_1,t+1}| \le \rho_{1,j_1,t}\, ,j_1=1,\dots,m_1,\\
& | y_{m_1+j_2,t} - y_{m_1+j_2,t+1}| \le \rho_{2,j_2,t}\,,j_2=1,\dots,m_2\\
& (t=1,\dots,T-1)
\end{aligned}
\right\}\right.
\]
with fixed positive prices $c_{i,t},\bar{c}_{1,j_1,t},\bar{c}_{2,j_2,t}$ and bounds 
$a_{i,t}$, $b_{i,t}$, $\delta_{i,t}$, $w_{1,j_1,t}$, $w_{2,j_2,t}$, $z_{1,j_1,t}$, 
$\rho_{1,j_1,t}$, $\rho_{2,j_2,t}$. 
We assume that the demands $\xi_t$ follow the condition
\begin{equation}\label{eq:ARMA_rel}
\xi_{t}= m_t + \eta_t, \quad \text{for } 1\le t \le T,
\end{equation}
where $m=(m_1,\dots,m_T)$ is a vector of expected values simulating the 
\emph{trend} or \emph{seasonality} of the demands, and $\eta$ is an 
\emph{ARMA(p,q) process} given by the recurrence equation
\begin{equation}\label{eq:ARMA_rec}
\eta_t=\sum_{i=1}^{p} \alpha_i \eta_{t-i} + \sum_{j=1}^{q}\beta_j \gamma_{t-j}
 + \gamma_{t}\quad(t\in\Z)
\end{equation}
with i.i.d. Gaussian noise $\gamma_t \sim$ N(0,1) and characteristic polynomials
$P(z)=1-\sum_{i=1}^{p}\alpha_{i}z^{i}$ and $Q(z)=1+\sum_{i=1}^{q}\beta_{i}z^{i}$.
An ARMA(p,q) process is stationary (i.e., the covariance function $R(t,s)=
\mathbb{E}(\eta_{t}\eta_{s})$ is of the form $R(t,s)=\lambda(|t-s|+1)$, $1\le t,s\le T$) 
iff the polynomials $P$ and $Q$ do not have common zeros and $P(z)\neq 0$ for 
all $z\in\mathbb{C}$ with $|z|\leq 1$ (see \cite[Chapter 3]{BrDa02}).

The vector of demands $\xi_1,\dots,\xi_T$ is then normally distributed with 
mean vector $m$ and covariance matrix dependending on the constants 
$\alpha_i$, $\beta_j$, $1 \le i \le p$, $1 \le j \le q$, $p,q\in\N$.
Such models have been considered for simulating electricity load demands in 
energy industry, see e.g. \cite{PE+10} and \cite{EiRW05}. Note that since 
the model includes unbounded demands $\xi$, no upper bounds in the variables $y_{m_1+j_2,t}$, $j_2=1,\dots,m_2$, $t=1,\dots,T$, were imposed, allowing to 
cover arbitrarily large demand values. We select in addition the prices 
$\bar{c}_{2,j_2,t}$ significantly higher than the prices $\bar{c}_{1,j_1,t}$, 
such that the variables $y_{m_1+j_2,t}$, $j_2=1,\dots,m_2$, $t=1,\dots,T$, 
do not represent always the trivial choice for costs minimization. 
For our tests, we choose the real dimension $d=T=100$, and the model 
constants $p=2$, $q=6$, $\alpha_1=-0.52$, $\alpha_2=0.45$, $\beta_1=-0.17$, 
$\beta_2=0.12$, $\beta_3=0.05$, $\beta_4=-0.07$, $\beta_5=0.06$, 
$\beta_6=0.04$. The resulting ARMA process $\eta$ is stationary and, hence, 
$\mathbb{E}(\eta_{t}\eta_{s})=\lambda(|t-s|+1)$, $1\le t,s\le T$. The values 
$\lambda(t), 1\le t \le T$, can be obtained by solving a system of linear 
equations with coefficients depending on the constants $\alpha_i$, 
$\beta_j$, $1 \le i \le 2$, $1 \le j \le 6$ (see \cite{BrDa02} for 
detailed information about modeling with ARMA processes). The resulting 
covariance matrix $\Sigma$ is Toeplitz symmetric, with entry values 
$\Sigma(i,j)=\lambda(|i-j|+1)$. The integration problem is transformed by factorizing the covariance matrix $\Sigma=A\,A^{\top}$ as usually 
recommended in Gaussian high-dimensional integration (see 
\cite[Sect. 2.3.3]{Glas04}). We carry out our tests using the Cholesky factorization $A=L_{C}$ (CH) and the principal component analysis 
factorization $A=U_{P}$ (PCA) (see Section \ref{sens}). After the 
factorization of $\Sigma$ assumptions (A1)--(A4) (see Section 
\ref{twostage}) are satisfied. Hence, Theorem \ref{t3} applies if 
(A5) is satisfied. 

A simulated demands-path \mbox{$\xi_{1},\dots,\xi_{d}$} can then
be obtained by
\[
(\xi_{1},\dots,\xi_{d})^{\top}=A\,(\phi^{-1}(z_1),\dots,\phi^{-1}(z_d))^{\top} 
+ (m_1,\dots,m_d),
 \]
where $Z=(z_{1},\ldots,z_{d})\sim U([0,1]^d)$ (i.e., the probability
distribution of $Z$ is uniform on $[0,1]^{d}$), and $\phi^{-1}(.)$ 
represents the inverse cumulative normal distribution function, which 
can be efficiently and accurately calculated by Moro's algorithm (see 
\cite[Sect. 2.3.2]{Glas04}). The evaluation begins then with MC or 
randomized QMC points for the samples $Z \sim U([0,1]^d)$. For MC
points in $[0,1]^d$ we used the Mersenne Twister \cite{MaNi98} as
pseudo random number generator. For QMC, we use randomly scrambled
Sobol' points with direction numbers given in \cite{JoKu03} and
randomly shifted lattice rules \cite{SlKJ02,KuSS11}. The implemented
scrambling technique is random linear scrambling described in
\cite{Mato98}. For our tests, we considered cubic decaying weights
$\gamma_{j}=\frac{1}{j^{3}}$ for constructing the lattice rules.

We chose the following parameters for the numerical experiments:
\begin{itemize}
\item $I=10$, $m_1=6$, $m_2=2$.  
\item For all $i,j_1,j_2,t,$ we select randomly $a_{i,t} \in [0.001,0.003]$, $b_{i,t}\in [0.3,0.6]$, $\delta_{i,t}\in [0.3,0.35]$, $w_{1,j_1,t}, 
w_{2,j_2,t}\in[0.000001,0.00002]$, $z_{1,j_1,t}\in [5,7]$, and 
$\rho_{1,j_1,t},\rho_{2,j_2,t} \in [1.0,1.1]$.
\item For all $i,j_1,j_2,t,$ we select randomly $c_{i,t} \in [7,9]$, 
$\bar{c}_{1,j_1,t}\in [8,10]$, and $\bar{c}_{2,j_2,t} \in[12,14]$.
\end{itemize}
The given parameters were chosen to attempt avoiding trivial solutions 
of the linear programs. \\
We perform two different kind of tests in our experiments. For the first 
kind of tests we fix $n$ sampling points $\xi^{j}$ and replace the integral 
of the second stage function $\Phi(x,\cdot)$ by the equal weight MC and 
randomized QMC quadrature rule, respectively.
Then we solve the resulting large linear program
\begin{equation}\label{eq:Tests_Firstkind}
\min_{x\in\mathbb{R}^{IT}}\Big\{\sum_{t=1}^{T} \sum_{i=1}^{I} c_{i,t}\,x_{i,t} +
\frac{1}{n}\sum_{j=1}^{n}\Phi(x,\xi^{j}) P(d\xi)\,:\,x\in X \Big\}.
\end{equation}
For the second kind of tests, we select fixed feasible points $\,x\in X$ 
and examine the integration errors for the expected recourse
\begin{equation}\label{eq:Tests_Secondkind}
\int_{\R^{T}}\Phi(x,\xi) P(d\xi)
\end{equation}
by equal weight MC or randomized QMC quadrature rules. 
\begin{figure}[h]
\hspace{-1cm}
\centering
\includegraphics[width=12cm, height=7cm]{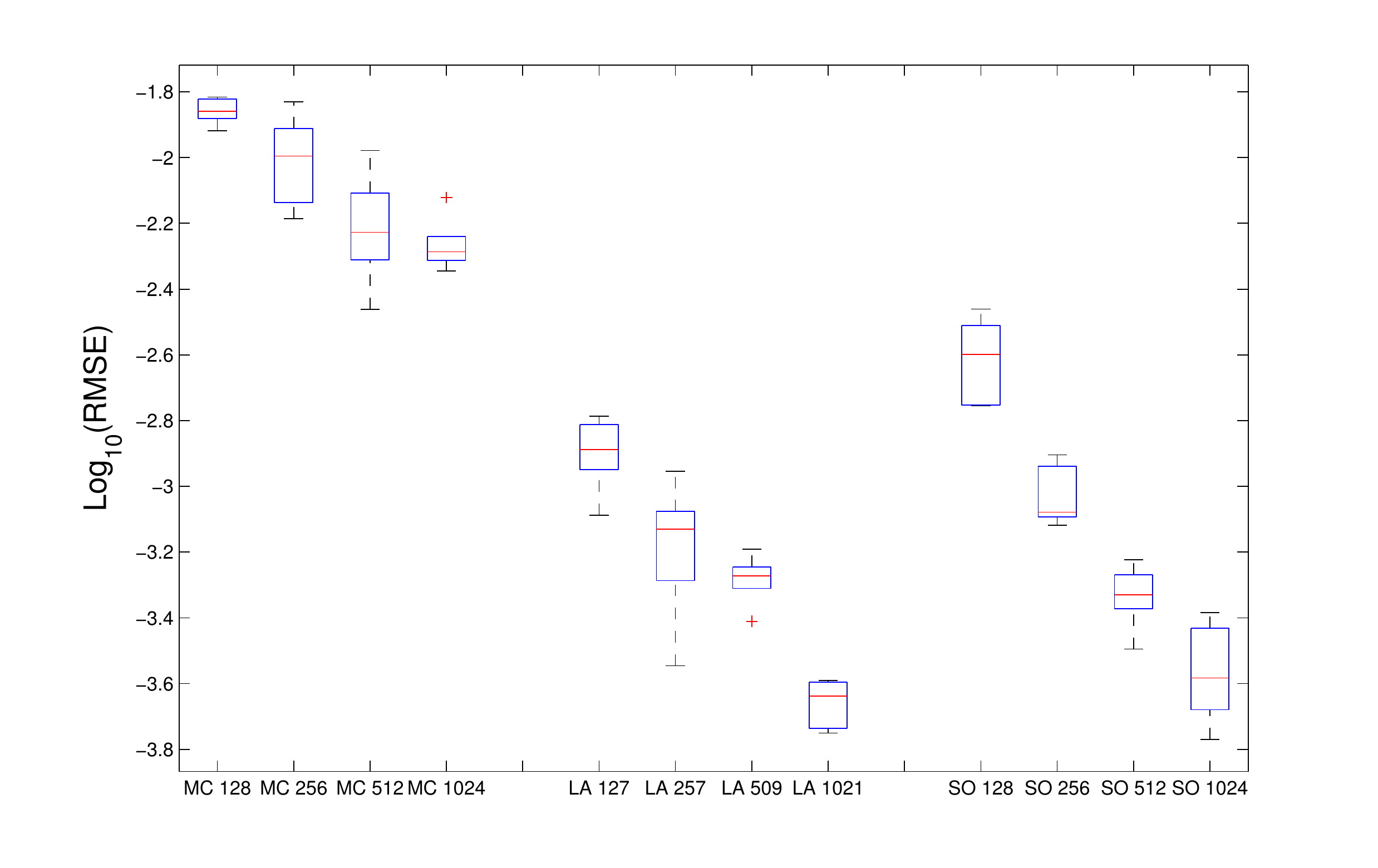}\\
\vspace{-0,5cm}\hspace{-0,9cm}
\includegraphics[width=12cm, height=7cm]{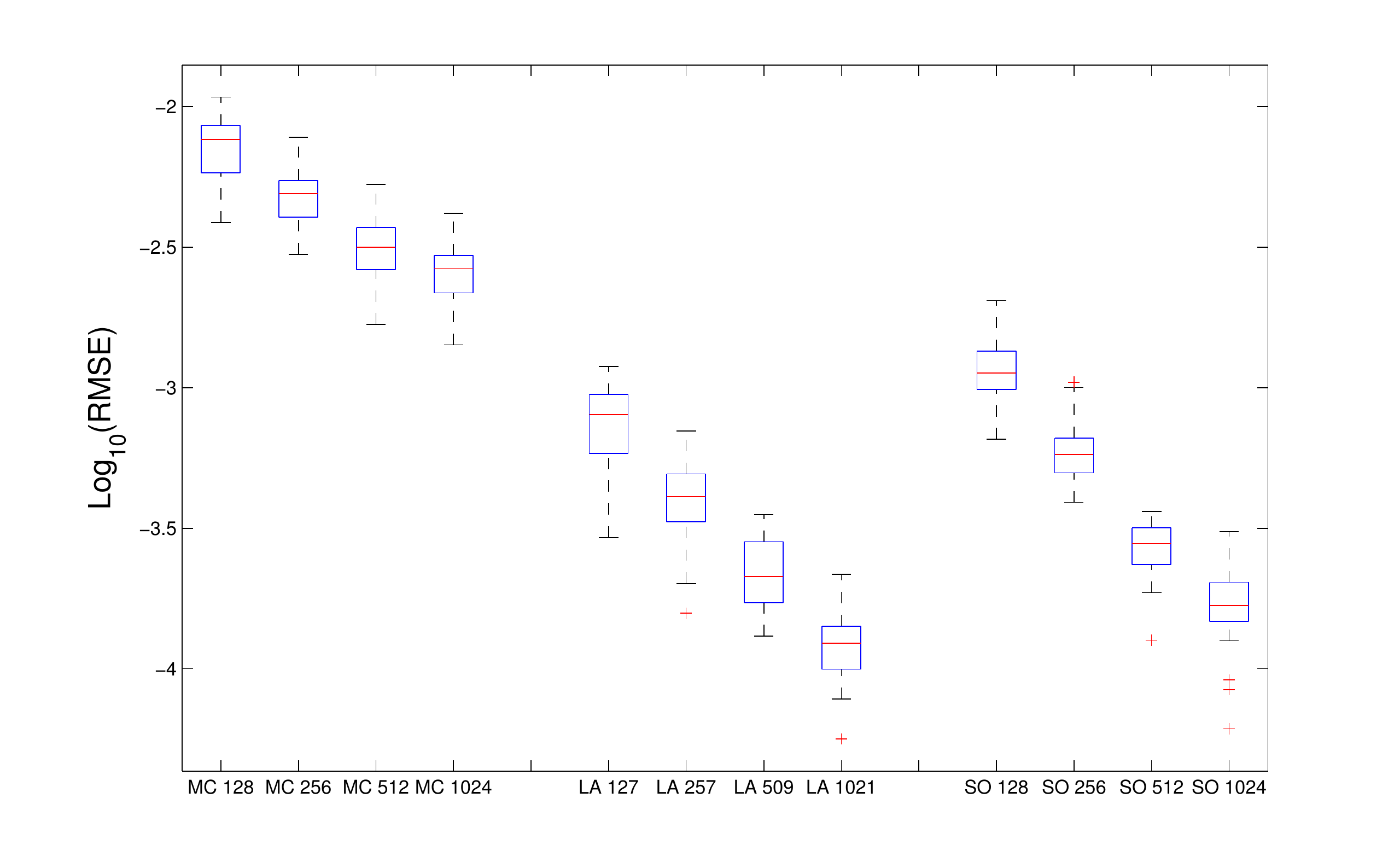}
\caption{Shown are the  $Log_{10}$ of relative RMSE with PCA factorization of
      covariance matrix for integrating $\Phi(x,\cdot)$ (upper figure) and for
      the minimum in \eqref{eq:Tests_Firstkind} (lower figure). Results for 
      Mersenne Twister MC and randomly scrambled Sobol' QMC with $128,256,512$ 
      and $1024$ points (MC $128$,... or SO $128$,...), and randomly shifted 
      lattice rules QMC with $127,257,509$ and $1021$ lattice points (LA $127$,...).
     }    
\label{fig:PCA}
\end{figure}
For simplicity 
we choose the fixed feasible points $x\in X $ to be the optimal solutions 
of the tests of the first kind, which were obtained by solving the resulting 
linear program for different costs while keeping the constraint set unchanged. 
The aim of these experiments is twofold. First we examine the convergence rate 
of the MC or randomized QMC quadrature rules with some fixed feasible points 
$\,x\in X$ for the expected recourse in the tests of second kind. Secondly we 
examine if these convergence rates in terms of sample sizes $n$ are translated 
to the resulting large linear programs for the tests of first kind.
\begin{figure}[h]
\hspace{-1cm}
\centering
\includegraphics[width=12cm, height=7cm]{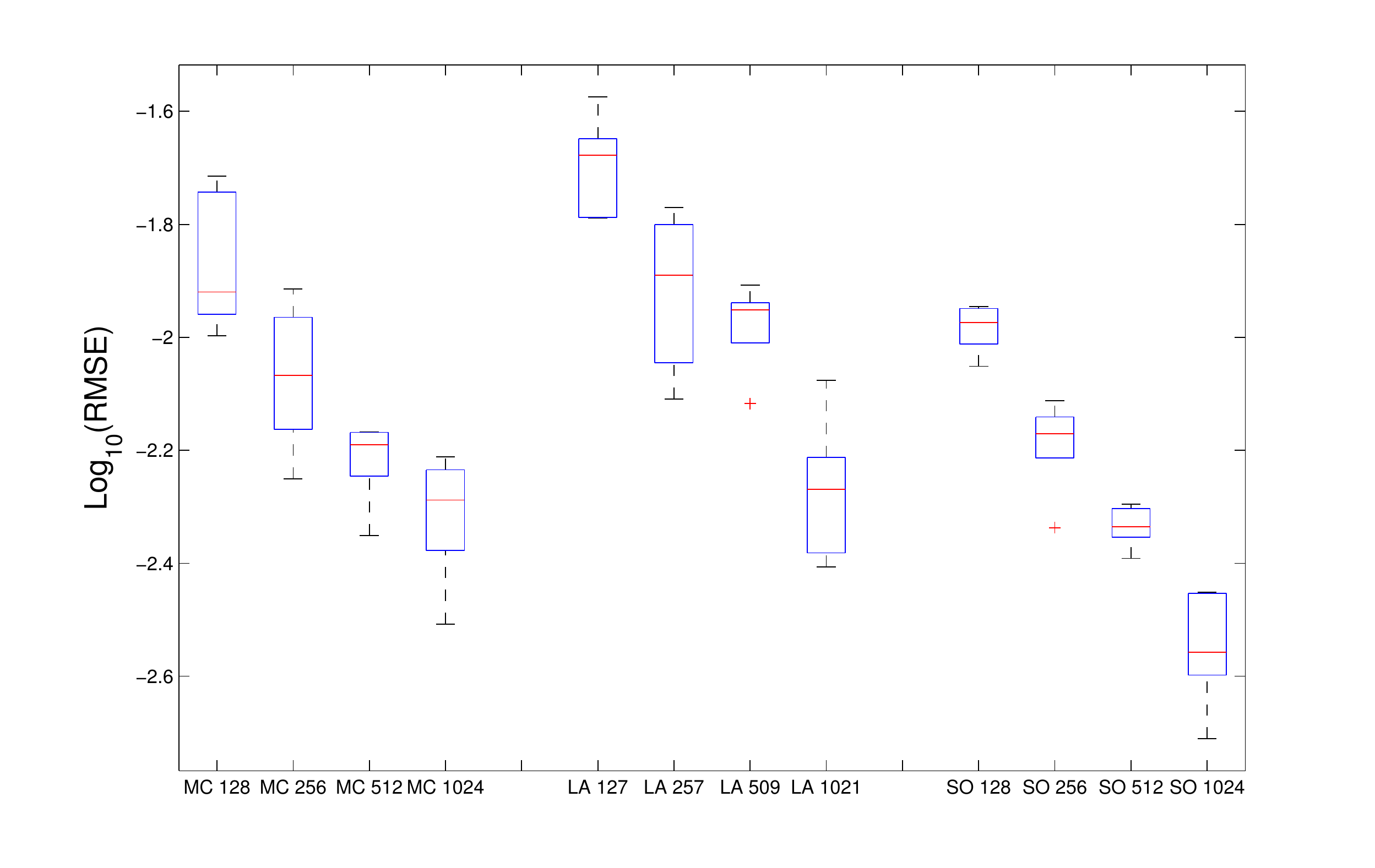}\\
\vspace{-0,5cm}\hspace{-0,9cm}
\includegraphics[width=12cm, height=7cm]{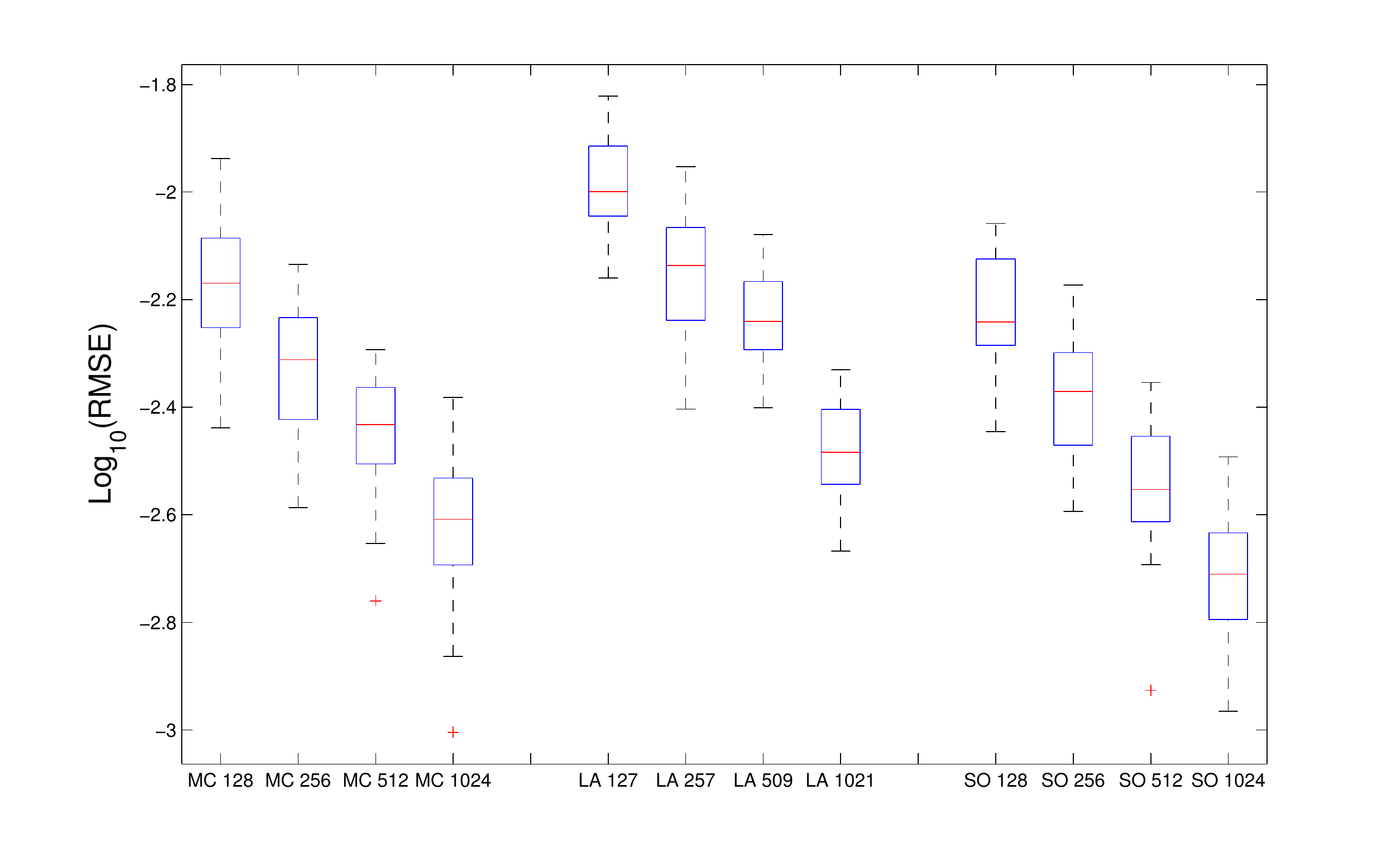}
\caption{Shown are the  $Log_{10}$ of relative RMSE with Cholesky factorization 
      of covariance matrix for integration of $\Phi(x,\xi)$ (upper figure) and 
      for the minimum in \eqref{eq:Tests_Firstkind} (lower figure). Results for 
      Mersenne Twister MC and randomly scrambled Sobol' QMC with $128,256,512$ 
      and $1024$ points (MC $128$,... or SO $128$,...), and randomly shifted 
      lattice rules QMC with $127,257,509$ and $1021$ lattice points (LA $127$,...).
     }
\label{fig:CH}
\end{figure}
The results for the tests of first and second kind under PCA factorization 
are summarized in Figure \ref{fig:PCA}. We chose $n=128,256,512,1024$ for 
the Mersenne Twister and for Sobol' points. For randomly shifted lattices, 
we chose the primes $n=127,257,509,1021$. The random shifts were generated 
using the Mersenne Twister. We estimate the relative root mean square errors 
(RMSE) of the estimated integrals (for the tests of the first kind) and of 
the optimal objective values (for the tests of the second kind) by taking 
$10$ runs of every experiment, and repeat the process $30$ times for the box 
plots in the figures. The box-plots show the first (lower bound of the box)
and third quartiles (upper bound of the box), and the median (line between
lower and upper bound). Outliers are marked by plus signs and the remaining 
results lie between the bounds.

The average of the estimated rates of convergence for both kind of tests under 
PCA ranged in $[-0.95,-0.85]$ for randomly shifted lattice rules, and in 
$[-1,-0.9]$ for randomly scrambled Sobol' points, for different price- and 
bound-parameters as listed above. This is clearly superior to the MC convergence 
rate of $-0.5$. The effective truncation dimension of $\Phi(x,\cdot)$ was 
tested at $20$ different feasible vertices $x$ (obtained from the tests of 
first kind with different price parameters and fixed bounds). We used the
algorithm proposed in \cite{WaFa03} with $2^{16}$ randomly scrambled Sobol' 
points ensuring that all results for the ANOVA total and partial variances 
were obtained with at least $3$ digits accuracy. The effective dimension 
$d_{T}$ remained close to $2$ in most cases and always $\le 6$. 
Further tests for the case $d_{T}=6$ showed that the variance accumulated by the first order ANOVA 
terms $f_{\{i\}}, \; 1\le i \le 6$ was approximately $95 \%$ 
of the total variance. The first order ANOVA 
terms $f_{\{i\}}, \; 7\le i \le d$ accumulated in total approximately $0,5 \%$ 
of the total variance. Moreover, adding the variance of the ANOVA terms 
$f_{\{1,2\}}$ and $f^2_{\{1,3\}}$ to the variance of the terms 
$f_{\{i\}}, \; 1\le i \le 6$ resulted in a variance accumulation higher than $99 \%$. 
Therefore we can conclude that the effective superposition dimension for the PCA case
is $d_S(0.01)=2$ in this case. Intensive computations seem to show that we may have $d_S=2$ for even smaller 
values of $\epsilon$ than $0.01$.  
Hence, PCA serves as excellent dimension reduction technique.

Although the geometric condition (A5) seems difficult to prove in this case 
(and maybe in many high-dimensional realistic examples encountered in energy industry), 
we may rely on Corollary \ref{c2} which states that the condition
is satisfied for almost all covariance matrices except for countably many. 
Indeed, it seems that the recourse function $\Phi(x,\cdot)$ is well approximated 
by a low dimensional smooth function as is the case in many practical examples 
considered in finance (see \cite{GrKS13}), for different feasible vertices 
$x\in X$. Further tests were carried out by combining randomly shifted lattice rules 
with the tent transformation as described in \cite{Hick02}, but no improvements in the 
convergence rates beyond $O(n^{-1})$ were observed for our feasible range of sample sizes. 
Similarly no improvement beyond the rate 
$O(n^{-1})$ was observed for scrambled Sobol' sequences as might be expected
for smooth integrand (see Section \ref{rqmc})). This may be explained by the
lack of the required smoothness properties of the second order ANOVA 
approximation. 

Using the Cholesky factorization, the results for both kind of tests
were completely different than those under PCA. The average of the 
estimated rates of convergence of randomized QMC ranged in $[-0.6,-0.5]$,
which is very close to the expected MC rate of $-0.5$. The results for the
Cholesky factorization are presented in Figure \ref{fig:CH}. The effective 
truncation dimension of $\Phi(x,\cdot)$ was estimated to be equal to 
$d_{T}=100$, which is just the real dimension $d$ of the problem.
Tests showed that the variance accumulated by the first order ANOVA 
terms $f_{\{i\}}, \; 1\le i \le d$ was approximately $20 \%$ 
of the total variance. It seems very likely that the 
the effective superposition dimension for the Cholesky case is really high-dimensional.

\section{Conclusions}

Our theoretical results in Section \ref{anovatwostage} imply that
all ANOVA terms except the one of highest order of integrands $f$
appearing in linear two-stage stochastic programs are smoother
than $f$. More precisely, the ANOVA terms of first and second order
belong to the tensor product Sobolev space which is important for
optimal convergence rates of randomly shifted lattice rules. Error
estimates as in Remark \ref{r2} then indicate that we may expect 
that Quasi-Monte Carlo approximations of two-stage stochastic
programs converge with the optimal rate (\ref{rate}) even for high
dimensions $d$ if the effective superposition dimension satisfies
$d_{S}\le 2$. Since we estimate the effective truncation dimension
$d_{T}$ and it holds $d_{S}\le d_{T}$, it is important that $d_{T}$ 
is equal to or at least close to $2$. This requires the use of 
dimension reduction techniques, for example, principal component 
analysis for (log)normal probability distributions $P$. 

Our preliminary computational experience on applying Quasi-Monte
Carlo methods to a two-stage stochastic production planning problem
confirms the theoretical results. They show that using appropriate
Quasi-Monte Carlo methods instead of Monte Carlo may lead to a
substantial improvement, because one may work with a much smaller
number of scenarios if suitable dimension reduction techniques allow 
for an essential reduction from $d_{T}=d$ to $d_{T}$ close to $2$.

Altogether, there are good reasons to conclude that recent
Quasi-Monte Carlo methods (like (scrambled) Sobol' sequences and
randomly shifted lattice rules) {\em may be efficient} for two-stage 
linear stochastic programs (even if the programs are large scale) if 
they allow for a clear dimension reduction. However, our present
theoretical results do not support the use of higher order QMC methods
(see \cite{Dick08,DiPi10}) since the first and second order ANOVA terms
do not satisfy the required smoothness conditions.

\begin{acknowledgement}
The authors wish to express their gratitude to Prof. Ian Sloan
(University of New South Wales, Sydney) for inspiring conversations
during his visit of the Humboldt-University Berlin in 2011. The
research of the first author is partially supported  by a grant of
Kisters AG, the second by a grant of the German Bundesministerium
f\"ur Wirtschaft und Technologie (BMWi) and the third by the DFG
Research Center {\sc Matheon} at Berlin. The authors extend their
gratitude to two anonymus referees and to the Associate Editor 
for their constructive and stimulating criticism.
\end{acknowledgement}

\end{document}